\newtheorem{theorem}{Theorem}[section]
\newtheorem{lemma}[theorem]{Lemma}
\newtheorem{proposition}[theorem]{Proposition}
\theoremstyle{definition}
\newtheorem{assumption}{Assumption}[section]
\newtheorem{definition}[theorem]{Definition}
\newtheorem{example}{Example}
\newcommand{\exclude}[1]{}
\newcommand{\1}{{\mathbf{1}}}
\newcommand{\E}{{\mathbb{E}}}
\newcommand{\N}{{\mathbb{N}}}
\renewcommand{\P}{{\mathbb{P}}}
\newcommand{\R}{{\mathbb{R}}}
\newcommand{\sgn}{\operatorname{sgn}}
\definecolor{darkgreen}{rgb}{0,0.5,0}
\definecolor{lightgreen}{rgb}{0.5,0.9,0.5}
\definecolor{magenta}{rgb}{0.75,0,0.25}
\definecolor{violet}{rgb}{0.25,0,0.75}
\newcommand{\appx}{X^{(\delta)}}
\newcommand{\appxx}{X^{(\delta),x}}
\newcommand{\appz}{Z^{(\delta)}}
\newcommand{\eu}{{\underline{u}}}
\newcommand{\es}{{\underline{s}}}
\newcommand{\et}{{\underline{t}}}
\renewcommand{\P}{{\mathbb P}}
\newcommand{\cF}{{\cal F}}
\newcommand{\be}{\begin{equation}}
\newcommand{\ee}{\end{equation}}
\newcommand{\bea}{\begin{eqnarray}}
\newcommand{\eea}{\end{eqnarray}}
\newcommand{\beast}{\begin{eqnarray*}}
\newcommand{\eeast}{\end{eqnarray*}}
\newcommand{\bproof}{\begin{proof}}
\newcommand{\eproof}{\end{proof}}
\title{Existence, uniqueness, and approximation of solutions of jump-diffusion SDEs with discontinuous drift}
\author{Pawe{\l} Przyby{\l}owicz \and Michaela Sz\"olgyenyi}
\date{Preprint, January 2021}
\begin{document}

\maketitle


\begin{abstract}
In this paper we study jump-diffusion stochastic differential equations (SDEs) with a discontinuous drift coefficient and a possibly degenerate diffusion coefficient.
Such SDEs appear in applications such as optimal control problems in energy markets.
We prove existence and uniqueness of strong solutions. In addition we study the strong convergence order of the Euler-Maruyama scheme and recover the optimal rate $1/2$.\\

\noindent Keywords: jump-diffusion stochastic differential equation, discontinuous drift, existence and uniqueness, Euler-Maruyama scheme, strong convergence rate\\
Mathematics Subject Classification (2010): 60H10, 65C30, 65C20, 65L20
\end{abstract}

\section{Introduction}\label{sec:intro}

We consider a time-homogeneous jump-diffusion stochastic differential equation (SDE)
\begin{align}\label{eq:SDE}
dX_t &= \mu(X_t)  dt + \sigma (X_t) dW_t+\rho(X_{t-})dN_t , \quad t\in[0,T], \quad X_0=\xi,
 \end{align}
where $\xi\in\R$, $\mu,\sigma,\rho\colon\R\to \R$ are measurable functions, $T\in(0,\infty)$, $W=(W_t)_{t\in[0,T]}$ is a
standard Brownian motion and $(N_t)_{t\in[0,T]}$ is a Poisson process with Borel measurable and bounded intensity $\lambda\colon [0,T]\to (0,\infty)$ on the filtered probability space $(\Omega,\cF,(\cF_t)_{t\in[0,T]},\P)$ that satisfies the usual conditions.

Furthermore, let $N\in\N$, define the equidistant time grid $0=t_0<t_1<\dots<t_N=T$ with $t_{k+1}-t_k=\delta$ for all $k\in\{0,\dots,N-1\}$ and denote for all $t\in[0,T]$, $\et:=\max\{ t_k\colon t\ge t_k \}$.
The time-continuous Euler-Maruyama (EM) scheme is given by $\appx_0=\xi$ and
\begin{align} \label{euler}
\appx_{t} = \appx_{\et}+ \mu(\appx_{\et})(t-\et) + \sigma(\appx_{\et})(W_{t}-W_{\et}) + \rho(\appx_{\et})(N_{t}-N_{\et}), \quad t\in(0,T].
\end{align}

In case the coefficients $\mu$, $\sigma$, and $\rho$ are Lipschitz, it is well known that SDE \eqref{eq:SDE} admits a unique strong solution which can be approximated with the EM scheme at strong convergence order $1/2$.

In this work we allow $\mu$ to be discontinuous in a finite number of points.
This is relevant for example for modelling energy prices, where jumps in the paths are a stylised fact, see, e.g., \cite{benth2014}.
Control actions on energy markets often lead to discontinuities in the drift of the controlled process, cf., e.g., \cite{sz2016a,shardin2017}.

We study existence and uniqueness of solutions to SDE \eqref{eq:SDE} as well as numerical approximations of this solution.\\

In the jump-free case, SDEs with discontinuous drift have been studied intensively in recent years.
For existence and uniqueness results see the classical papers \cite{Zvonkin1974,Veretennikov1981,Veretennikov1982,Veretennikov1984}, as well as newer results, where boundedness of the coefficients and non-degeneracy of the diffusion coefficient is no longer needed, see \cite{sz14,sz2016a,sz15,sz2017a}.
For approximation results see \cite{gyongy1998,halidias2006,halidias2008,sz15,ngo2016,sz2017a,ngo2017a,ngo2017b,MENOUKEUPAMEN}.   
In the scalar case the best known results are $L^p$-order $1/2$ of the EM scheme, see \cite{muellergronbach2019} and $L^p$-order $3/4$ of a transformation-based Milstein-type scheme, see \cite{muellergronbach2019b}. 
In the multidimensional setting the best known results are $L^2$-order $1/4-$ of the EM scheme, see \cite{sz2018a} and $L^2$-order $1/2-$ of an adaptive EM scheme, see \cite{sz2018b}. 
In the special case of additive noise the best known results are $L^2$-order $(1+\kappa)/2-$ assuming piecewise Sobolev-Slobodeckij-type regularity of order $\kappa \in (0,1)$ for the drift, see \cite{NS19} and \cite{gerencser2018}, where they prove $L^2$-order $1/2-$ for the case where the drift is only bounded and $L^1$ (i.e.~the case $\kappa=0$).
A lower error bound of order $1$ for the pointwise $L^1$-error is proven in \cite{hefter2018}. Lower bounds will be also studied in a forthcoming paper by M\"uller-Gronbach and Yaroslavtseva.

In the case of presence of jumps in the driving process, to the best of our knowledge there are no results available for SDEs with discontinuous drift so far. In the case of continuous coefficients however, the number of publications is still growing. This is due to the already mentioned fact that jumps often arise in models for energy markets, financial markets, or physical phenomena, see for example \cite{PBL,situ2006}. The research directions cover for example classical It\^o-Taylor approximations as in \cite{gar1,gar2,PBL},  construction of Runge-Kutta methods as in \cite{br,PBL}, approximation of jump-diffusion SDEs under nonstandard assumptions as in \cite{kds,dflm,dffm,hk1,hk2,hk3}, multilevel Monte Carlo methods for weak approximation as in \cite{dh}, and asymptotically optimal approximations of solutions of such SDEs as in \cite{akpp,pp1,pp2,pp3}.\\

The current paper consists of two main contributions:~the first existence and uniqueness result for jump-diffusion SDEs with discontinuous drift and consequently the first approximation result for solutions to such SDEs. We obtain the optimal $L^2$-order $1/2$ for the EM scheme.

We reach our goal by adopting ideas from the jump-free case from \cite{sz15,sz2018a,muellergronbach2019} and extending everything to the case of presence of jumps.
An interesting by-product is an occupation time result for the EM process.

\section{Preliminaries}\label{sec:pre}

In the following we denote by $L_f$ the Lipschitz constant of a generic function $f$,
we define $\kappa\colon  [0,T]\to (0,\infty)$ with $\kappa_t=\int_0^{t}\lambda (s)ds$, and we denote by $\tilde N=(\tilde N_t)_{t\in[0,T]}$ the compensated Poisson process, that is  $\tilde N_t=N_t-\kappa_t$ for all $t\in[0,T]$. Note that $\tilde N$ is a square integrable $(\cF_t)_{t\in[0,T]}$-martingale.

In order to define assumptions on the drift coefficient, we recall the following definition.

\begin{definition}[\text{\cite[Definition 2.1]{sz2017a}}]\label{def:pwlip-1dim}
Let $I\subseteq\R$ be an interval and let $m\in\N$. We say a function $f\colon I\to\R$
is piecewise Lipschitz, if there are finitely many points $\zeta_1<\ldots<\zeta_m\in
I$ such that $f$ is Lipschitz on each of the intervals $(-\infty,\zeta_1)\cap I,
(\zeta_m,\infty)\cap I$, and $(\zeta_k,\zeta_{k+1}),k=1,\ldots,m-1$.
\end{definition}

\begin{assumption}\label{ass:ex-un}We assume the following on the coefficients of \eqref{eq:SDE}:
\begin{itemize}
\item[(ass-$\mu$)] The drift coefficient $\mu\colon\R \to \R$ is piecewise Lipschitz with $m\in\N$ discontinuities in the points $\zeta_1,\dots,\zeta_m\in\R$.
\item[(ass-$\sigma$)] The diffusion coefficient $\sigma\colon\R\to\R$ is Lipschitz and for all $k\in\{1,\dots,m\}$, $\sigma(\zeta_k) \neq 0$.
\item[(ass-$\rho$)] The jump coefficient $\rho\colon\R\to\R$ is Lipschitz.
\end{itemize}
\end{assumption}

\begin{lemma}
\label{lin_growth_c}
 Let Assumptions \ref{ass:ex-un} hold. Then $\mu$, $\sigma$, and $\rho$ satisfy a linear growth condition, that is
 there exist constants $c_\mu, c_\sigma, c_\rho \in (0,\infty)$ such that
 \begin{equation*}
 |\mu(x)|\le c_\mu(1+ |x|), \qquad |\sigma(x)|\le c_\sigma(1+ |x|), \qquad |\rho(x)|\le c_\rho(1+ |x|).
 \end{equation*}
\end{lemma}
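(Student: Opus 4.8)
The plan is to deduce the linear growth bounds from the piecewise-Lipschitz (resp. Lipschitz) hypothesis on each coefficient. The cases of $\sigma$ and $\rho$ are the standard ones: a globally Lipschitz function $f$ on $\R$ satisfies $|f(x)| \le |f(0)| + L_f|x| \le (|f(0)| + L_f)(1+|x|)$, so one takes $c_\sigma = |\sigma(0)| + L_\sigma$ and $c_\rho = |\rho(0)| + L_\rho$ (choosing these constants strictly positive, which is harmless). The only genuine work is the drift $\mu$, which is merely piecewise Lipschitz in the sense of Definition~\ref{def:pwlip-1dim}, with discontinuity points $\zeta_1 < \dots < \zeta_m$; the issue is that $\mu$ need not be continuous across the $\zeta_k$, so one cannot telescope a single Lipschitz estimate from $0$ to $x$.

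The key step for $\mu$ is to handle the finitely many pieces separately and then take a maximum. On the unbounded pieces $(-\infty,\zeta_1)$ and $(\zeta_m,\infty)$, $\mu$ is Lipschitz with some constant $L$; fixing a reference point in each (say a point just inside, or using one-sided limits which exist by the Lipschitz property on the open interval), one gets $|\mu(x)| \le a + L|x|$ there for a suitable constant $a$, hence a bound of the form $c(1+|x|)$. On each bounded piece $(\zeta_k,\zeta_{k+1})$, $\mu$ is Lipschitz and the interval is bounded, so $\mu$ is simply bounded there, say by $M_k$, and $M_k \le M_k(1+|x|)$ trivially. One then sets $c_\mu$ to be the maximum of the finitely many constants arising from the $m+1$ pieces (and takes it strictly positive), which yields $|\mu(x)| \le c_\mu(1+|x|)$ for every $x \in \R\setminus\{\zeta_1,\dots,\zeta_m\}$; the values at the finitely many points $\zeta_k$ themselves can be absorbed by enlarging $c_\mu$ once more if necessary.

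There is no real obstacle here — the statement is essentially a bookkeeping exercise — but the one point requiring a little care is that "piecewise Lipschitz" only guarantees Lipschitz continuity on the \emph{open} intervals, so when I invoke boundedness on a bounded piece $(\zeta_k,\zeta_{k+1})$ I should note that a Lipschitz function on a bounded open interval is bounded (it extends continuously to the closure), and similarly on the unbounded pieces the affine bound $a + L|x|$ is legitimate because the Lipschitz inequality holds between any two points of the open ray. After assembling the three constants $c_\mu, c_\sigma, c_\rho$, the conclusion follows immediately.
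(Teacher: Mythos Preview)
Your proposal is correct and follows essentially the same approach as the paper: the globally Lipschitz coefficients $\sigma,\rho$ are handled by the standard estimate $|f(x)|\le |f(0)|+L_f|x|$, and for $\mu$ you treat the two unbounded pieces via a Lipschitz estimate from a fixed reference point and bound $\mu$ on the remaining bounded region, then take the maximum of the resulting constants. The only cosmetic differences are that the paper lumps all of $[\zeta_1,\zeta_m]$ into one bounded piece rather than treating each $(\zeta_k,\zeta_{k+1})$ separately, and does not explicitly single out the values at the $\zeta_k$ as you do.
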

\begin{proof}
We have that $|\sigma(x)|\le |\sigma(x)-\sigma(0)|+|\sigma(0)|\le L_\sigma |x|+|\sigma(0)|$. Setting $c_\sigma=\max\{L_\sigma,|\sigma(0)|\}$ we get $|\sigma(x)|\le c_\sigma(1+ |x|)$.
The analog estimate holds for $\rho$. For $x\in(-\infty,\zeta_1)$ we have that there exists an $\varepsilon \in (0,\infty)$ with $\zeta_1-\varepsilon>x$. With this $|\mu(x)|\le |\mu(x)-\mu(\zeta_1-\varepsilon)|+|\mu(\zeta_1-\varepsilon)|\le L_\mu |x-(\zeta_1-\varepsilon)|+|\mu(\zeta_1-\varepsilon)|\le L_\mu |x|+L_\mu |\zeta_1-\varepsilon|+|\mu(\zeta_1-\varepsilon)|$. Setting $c_\mu^1=\max\{L_\mu,L_\mu (\zeta_1-\varepsilon)+|\mu(\zeta_1-\varepsilon)|\}$ we get $|\mu(x)|\le c_\mu^1(1+ |x|)$. In the same way for $x\in(\zeta_m,\infty)$ there exists $c_\mu^2\in(0,\infty)$ with $|\mu(x)|\le c_\mu^2(1+ |x|)$. In the compact interval $[\zeta_1,\zeta_m]$, $\mu$ is bounded by a constant $c_\mu^3\in(0,\infty)$. Setting $c_\mu=\max\{c_\mu^1,c_\mu^2,c_\mu^3\}$ proves the lemma.
\end{proof}

\subsection*{The transform}
We will apply a transform $G\colon \R\to \R$ from \cite{sz2017a} that has the property that
the process formally defined by $Z=G(X)$ satisfies an SDE with 
Lipschitz coefficients and therefore has a solution by classical results, see \cite[p.~255, Theorem 6]{protter2004}.

The function $G$ is chosen so that it impacts the coefficients of the SDE \eqref{eq:SDE} only locally around the points of discontinuity of the drift.
This behaviour is ensured by incorporating a bump function $\phi\colon \R \to \R$ into $G$, which is defined by 
\begin{align}\label{eq:bump}
\phi(u)=
\begin{cases}
(1+u)^3(1-u)^3 & \text{if } |u|\le 1,\\
0 & \text{otherwise}.
\end{cases}
\end{align}
With this the transform $G$ is defined by
\begin{align}\label{eq:G-1d}
G(x)=x+ \sum_{k=1}^m \alpha_k\phi\!\left(\frac{x-\zeta_k}{c}\right)(x-\zeta_k)|x-\zeta_k| ,
\end{align}
with
\begin{align*}
& \R\setminus\{0\} \ni \alpha_k
=\frac{\mu(\zeta_k-)-\mu(\zeta_k+)}{2\sigma(\zeta_k)^2},
\qquad k\in\{1,\dots,m\},
\\ 
& (0,\infty)\ni c<\min\bigg\{\min\limits_{1\le k\le m}\frac{1}{6|\alpha_k|},\min\limits_{1\le k\le m-1}\frac{\zeta_{k+1}-\zeta_k}{2}\bigg\}.
\end{align*}
Note that $c$ is chosen such that for all $x\in\R$, $G'(x)>0$, so that $G$ has a global inverse $G^{-1}\colon \R\to\R$.
The transformation $G$ and its inverse $G^{-1}$ are Lipschitz and the function $G\in C^1_b$, that is it is continuously differentiable with bounded derivative.
Furthermore, $G'$ is piecewise Lipschitz, since it is differentiable on $\R\setminus \{\zeta_1,\dots,\zeta_m\}$ with bounded derivative, see \cite[Lemma 3.8]{sz2017a}.
Hence, $G'$ is Lipschitz, since it is piecewise Lipschitz and continuous, see \cite[Lemma 2.2]{sz15}.
These properties are proven in \cite{sz2017a}.

Finally, note that the choice of $G$ is not unique. In fact, it suffices to ensure existence of a function $G$ satisfying these properties.

\section{Existence and uniqueness result}

We are going to prove our first main result.

\begin{theorem}
\label{exun}
Let Assumption \ref{ass:ex-un} hold.
Then the SDE \eqref{eq:SDE} has a unique global strong solution.
\end{theorem}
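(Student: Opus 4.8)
The plan is to reduce the discontinuous-drift SDE \eqref{eq:SDE} to an SDE with Lipschitz coefficients via the transform $G$ from \eqref{eq:G-1d}, for which existence and uniqueness is classical. Concretely, set $Z_t = G(X_t)$. Since $G \in C^1_b$ but is only piecewise twice differentiable, I would apply an It\^o formula for jump-diffusions to the function $G$ applied to $X$, being careful that the second-derivative term only needs $G' $ to be (Lipschitz and hence) absolutely continuous, so that $G''$ exists a.e.\ and the It\^o correction term $\frac12 G''(X_t)\sigma(X_t)^2$ is well-defined for Lebesgue-a.e.\ point; the key algebraic fact, built into the definition of $\alpha_k$, is that at each discontinuity $\zeta_k$ the jump in the drift is exactly compensated, so the transformed drift becomes continuous. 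This yields
\begin{align*}
dZ_t &= \widetilde{\mu}(Z_t)\,dt + \widetilde{\sigma}(Z_t)\,dW_t + \widetilde{\rho}(Z_{t-})\,dN_t, \quad Z_0 = G(\xi),
\end{align*}
where, writing $g = G^{-1}$, the new coefficients are
\begin{align*}
\widetilde{\mu}(z) &= \bigl(G'\mu + \tfrac12 G''\sigma^2\bigr)(g(z)), \qquad \widetilde{\sigma}(z) = (G'\sigma)(g(z)), \qquad \widetilde{\rho}(z) = G\bigl(g(z) + \rho(g(z))\bigr) - z.
\end{align*}

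**Verifying the transformed coefficients are Lipschitz.** The heart of the argument is that $\widetilde\mu, \widetilde\sigma, \widetilde\rho$ are globally Lipschitz. For $\widetilde\sigma$ this follows since $G'$ is bounded and Lipschitz, $\sigma$ is Lipschitz with linear growth (Lemma \ref{lin_growth_c}), and $g$ is Lipschitz; a product of bounded Lipschitz functions composed with a Lipschitz function is Lipschitz. For $\widetilde\rho$, note $x \mapsto x + \rho(x)$ is Lipschitz, $G$ is Lipschitz, $g$ is Lipschitz, so the composition minus the identity is Lipschitz. The delicate term is $\widetilde\mu$: one must check $G'\mu + \tfrac12 G''\sigma^2$ is Lipschitz on $\R$. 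Away from the $\zeta_k$ this is a product/sum of piecewise-Lipschitz functions with locally bounded factors, using that $G''$ is bounded and piecewise Lipschitz (from \cite[Lemma 3.8]{sz2017a}); the one-sided limits at each $\zeta_k$ agree precisely because $\alpha_k = \frac{\mu(\zeta_k-)-\mu(\zeta_k+)}{2\sigma(\zeta_k)^2}$ forces $\tfrac12 (G''(\zeta_k-) - G''(\zeta_k+))\sigma(\zeta_k)^2 = -(\mu(\zeta_k-) - \mu(\zeta_k+))$ to cancel the jump of $G'\mu = \mu$ there (since $G'(\zeta_k) = 1$). Hence $\widetilde\mu$ is piecewise Lipschitz and continuous, so Lipschitz by \cite[Lemma 2.2]{sz15}; its linear growth follows from Lemma \ref{lin_growth_c} and boundedness of $G', G''$.

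**Transferring back.** Given the Lipschitz (hence linearly growing) coefficients $\widetilde\mu,\widetilde\sigma,\widetilde\rho$, the classical theory for jump-diffusion SDEs with Lipschitz coefficients — e.g.\ \cite[p.~255, Theorem 6]{protter2004} — gives a unique global strong solution $Z = (Z_t)_{t\in[0,T]}$. Then define $X_t := g(Z_t) = G^{-1}(Z_t)$. Applying the It\^o formula for $G^{-1}$ (again only $C^1$ with absolutely continuous derivative, but the same a.e.\ argument works, or one notes $G^{-1}$ inherits piecewise $C^2$ regularity), and using $G^{-1}\circ G = \mathrm{id}$, one checks that $X$ solves \eqref{eq:SDE} with $X_0 = g(G(\xi)) = \xi$. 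For uniqueness: if $X, \bar X$ are two strong solutions of \eqref{eq:SDE}, then $G(X)$ and $G(\bar X)$ both solve the transformed SDE (by the forward It\^o computation), hence coincide by uniqueness there, and applying $G^{-1}$ gives $X = \bar X$.

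**Main obstacle.** The routine parts are the Lipschitz/linear-growth bookkeeping and the classical existence citation; the genuine technical point requiring care is the application of It\^o's formula to $G$ (and $G^{-1}$), which are not $C^2$ — only $C^1$ with Lipschitz, piecewise-$C^1$ derivative — in the presence of both a diffusion and a jump term. One must justify that the It\^o correction term makes sense with $G''$ defined only Lebesgue-a.e.\ (the occupation-time/local-time density of the continuous part of $X$ is absolutely continuous since $\sigma(\zeta_k)\neq 0$, so the a.e.-undefined set is not charged), and separately that the jump term transforms simply as $G(X_{t-} + \rho(X_{t-})) - G(X_{t-})$ without any correction. I would invoke the version of It\^o's formula for $C^1$ functions with absolutely continuous derivative (e.g.\ a Meyer–It\^o / Bouleau–Yor type statement) adapted to semimartingales with jumps, or alternatively the It\^o–Tanaka approach as carried out in the jump-free setting of \cite{sz2017a}, extended to include the finite-activity jump term; this extension is where the "adopting ideas from the jump-free case and extending to jumps" claimed in the introduction does its work.
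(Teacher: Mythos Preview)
Your proposal is correct and follows essentially the same route as the paper: transform via $G$, cite \cite[p.~255, Theorem 6]{protter2004} for the Lipschitz SDE in $Z$, and transform back via $G^{-1}$. The paper handles your ``main obstacle'' succinctly by invoking the Meyer--It\^o formula \cite[p.~221, Theorem 71]{protter2004} directly (justified since $G'$ and $(G^{-1})'$ are Lipschitz, hence absolutely continuous), and it outsources the Lipschitz verification for $\widetilde\mu,\widetilde\sigma$ to \cite{sz2017a} rather than redoing the cancellation computation you sketch.
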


\begin{proof}
Since $G'$ is Lipschitz, we may apply the Meyer-It\^o formula, which follows from \cite[p.~221, Theorem 71]{protter2004}, to $Z=G(X)$ and get
\begin{equation}
\label{eq:Z}
	dZ_t=\tilde\mu (Z_t)dt+\tilde\sigma(Z_t)dW_t+\tilde \rho(Z_{t-})dN_t,
\end{equation}
where for all $z\in\R$,
\begin{align}
\tilde\mu(z)&=G'(G^{-1}(z)) \mu(G^{-1}(z))+\frac{1}{2}G''(G^{-1}(z))\sigma(G^{-1}(z))^2,\\
\tilde\sigma(z)&=G'(G^{-1}(z))\sigma(G^{-1}(z)),\\
\label{t_rho_def}
\tilde \rho(z)&=G(G^{-1}(z)+\rho(G^{-1}(z)))-G(G^{-1}(z))=G(G^{-1}(z)+\rho(G^{-1}(z)))-z.
\end{align}

In \cite{sz2017a} it is shown that $\tilde\mu$ and $\tilde\sigma$ are Lipschitz.
The jump coefficient $\tilde \rho$ is Lipschitz due to the global Lipschitz continuity of $G$ and $G^{-1}$. 
Hence, the SDE for $Z$, that is \eqref{eq:Z} with initial condition $Z(0)=G(\xi)$, has a unique global strong solution by \cite[p.~255, Theorem 6]{protter2004}.

Now observe that $(G^{-1})'(z)=1/G'(G^{-1}(z))$ is absolutely continuous since it is Lipschitz.
Moreover, $G^{-1}(Z_{t-})=\lim\limits_{s\to t-}G^{-1}(Z_s)=X_{t-}$, and by \eqref{t_rho_def} we have
\begin{equation}
	Z_{t-}+\tilde\rho(Z_{t-})=G(X_{t-}+\rho(X_{t-})).
\end{equation}
This implies that
\begin{equation}
	G^{-1}(Z_{t-}+\tilde \rho (Z_{t-}))-G^{-1}(Z_{t-})=\rho(X_{t-}).
\end{equation}
Therefore, again by using Meyer-It\^o formula
\begin{align*}
	dG^{-1}(Z_t)&=((G^{-1})'(Z_t)\tilde\mu (Z_t)+\frac{1}{2}(G^{-1})''(Z_t)\tilde\sigma(Z_t)^2)dt+(G^{-1})'(Z_t)\tilde\sigma(Z_t)dW_t
	\\&\quad
	+(G^{-1}(Z_{t-}+\tilde \rho (Z_{t-}))-G^{-1}(Z_{t-}))dN_t
	\\&=\mu(X_t)dt+\sigma(X_t)dW_t+\rho (X_{t-})dN_t.
\end{align*}
\end{proof}

\section{Convergence of the Euler-Maruyama method}
\label{sec:EM}

Our convergence proof is based on a transformation trick from \cite{sz15,sz2018a} in combination with ideas from \cite{muellergronbach2019} for the estimation of discontinuity crossing probabilities. By extending both to the case of presence of jumps and proving an occupation time result for the EM process, this leads to the optimal convergence order $1/2$.

\subsection{Preparatory lemmas}

In this section we present several lemmas, cf.~the results for the jump-free case in \cite{muellergronbach2019}, which we need for the proof of the main result of Section \ref{sec:EM}.

\begin{lemma} \label{msq-appx} Let Assumptions \ref{ass:ex-un} hold and let $p\in[2,\infty)$. There exist a constant $C_p^{(\text{M})}\in(0,\infty)$   such that for all $\delta\in(0,1)$,
$$ \E \Big[ \sup_{t \in [0,T]} |\appx_t  |^p   \Big] \leq  C_p^{(\text{M})}    $$  and such that for all $s,t \in[0,T]$, $s<t$,
$$ \E [ |\appx_t - \appx_s |^p ]  \leq C_p^{(\text{M})} \cdot |t-s|.$$
\end{lemma}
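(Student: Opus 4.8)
The plan is to put the continuous-time EM scheme \eqref{euler} into integral form and then close a Gronwall estimate at the level of $p$-th moments, using the Burkholder--Davis--Gundy (BDG) inequality for the Brownian integral and a Kunita-type BDG inequality for the compensated Poisson integral. Write $\bar\lambda:=\sup_{t\in[0,T]}\lambda(t)<\infty$ and recall from Lemma \ref{lin_growth_c} that $\mu,\sigma,\rho$ have at most linear growth. Telescoping \eqref{euler} over the grid yields, for $t\in[0,T]$,
\[
\appx_t=\xi+\int_0^t\mu(\appx_{\eu})\,du+\int_0^t\sigma(\appx_{\eu})\,dW_u+\int_0^t\rho(\appx_{\eu})\,dN_u ,
\]
where the integrands are piecewise constant, equal to $\appx_{t_k}$ on $(t_k,t_{k+1}]$; replacing $dN_u$ by $d\tilde N_u+\lambda(u)\,du$ rewrites the last term as a Lebesgue integral plus a square-integrable martingale.

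Before Gronwall's lemma can be invoked one must know the quantity to be bounded is finite, which is easy here because the scheme has only $N$ steps: $\appx_{t_{k+1}}$ is an explicit function of $\appx_{t_k}$ and of the increments of $W$ and $N$ over $[t_k,t_{k+1}]$, which possess moments of all orders and are independent of $\cF_{t_k}$, so an induction over $k$ shows each $\appx_{t_k}\in L^p$; bounding $\sup_{t\in[t_k,t_{k+1}]}|\appx_t|$ by $|\appx_{t_k}|+\delta\,|\mu(\appx_{t_k})|+|\sigma(\appx_{t_k})|\,\sup_{t\in[t_k,t_{k+1}]}|W_t-W_{t_k}|+|\rho(\appx_{t_k})|\,(N_{t_{k+1}}-N_{t_k})$ and maximizing over the finitely many $k$ then gives $f(T):=\E[\sup_{t\in[0,T]}|\appx_t|^p]<\infty$. (Alternatively one localizes with $\tau_R=\inf\{t\colon|\appx_t|\ge R\}$ and lets $R\to\infty$ via Fatou.)

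For the uniform bound, set $f(t):=\E[\sup_{s\le t}|\appx_s|^p]$, apply $|a+b+c+d|^p\le 4^{p-1}(|a|^p+|b|^p+|c|^p+|d|^p)$ to the displayed representation, and take expectations. The two Lebesgue-type integrals are handled by Jensen's inequality (H\"older on $[0,t]$ against normalized Lebesgue measure) together with linear growth, the Brownian integral by BDG followed by Jensen, and the compensated Poisson integral by Kunita's first inequality, valid for $p\ge2$:
\[
\E\Big[\sup_{s\le t}\Big|\int_0^s\rho(\appx_{\eu})\,d\tilde N_u\Big|^{p}\Big]\le C_p\,\E\Big[\Big(\int_0^t\rho(\appx_{\eu})^2\lambda(u)\,du\Big)^{p/2}\Big]+C_p\,\E\Big[\int_0^t|\rho(\appx_{\eu})|^{p}\lambda(u)\,du\Big] ,
\]
whose right-hand side is again $\le C\int_0^t(1+f(u))\,du$ by linear growth, boundedness of $\lambda$, and Jensen. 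Collecting terms gives $f(t)\le C_1+C_2\int_0^t f(u)\,du$ with $C_1,C_2\in(0,\infty)$ depending only on $p,T,\bar\lambda,c_\mu,c_\sigma,c_\rho,|\xi|$ and universal constants (in particular not on $\delta$), so Gronwall's lemma, applicable because $f(T)<\infty$, yields the first claim.

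For the increment bound I would apply the same representation on $[s,t]$, i.e.\ $\appx_t-\appx_s=\int_s^t(\mu+\lambda\rho)(\appx_{\eu})\,du+\int_s^t\sigma(\appx_{\eu})\,dW_u+\int_s^t\rho(\appx_{\eu})\,d\tilde N_u$, and estimate the three terms by Jensen, BDG and Kunita exactly as above, now inserting the uniform bound $\sup_{u\in[0,T]}\E[|\appx_u|^p]\le C_p^{(\mathrm{M})}$ just obtained in place of $f(u)$; this produces $\E[|\appx_t-\appx_s|^p]\le C(|t-s|^p+|t-s|^{p/2}+|t-s|)\le C_p^{(\mathrm{M})}|t-s|$ after using $|t-s|\le T$ and $p\ge2$. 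The one genuinely delicate ingredient is the jump term, where the classical BDG inequality is insufficient and one needs the BDG/Kunita inequality for discontinuous martingales (contributing, for $p>2$, the additional term $\E\int_0^t|\rho(\appx_{\eu})|^p\lambda(u)\,du$); that, together with first checking $f(T)<\infty$, is what a routine write-up might skip over.
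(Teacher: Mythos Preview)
Your proof is correct and follows essentially the same route as the paper: write the scheme in integral form, first verify finiteness of $\E[\sup_{t\le T}|\appx_t|^p]$ via an induction over grid points, then close a Gronwall estimate using H\"older/Jensen on the Lebesgue parts, BDG on the Brownian integral, and a BDG-type inequality for the compensated Poisson integral (the paper cites Doob's maximal inequality together with \cite[Lemma~2.1]{maghsoodi1996} where you invoke Kunita's first inequality, which amounts to the same thing); the increment bound is then obtained by the same estimates on $[s,t]$ combined with the first assertion.
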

\begin{proof}
There exists a constant $c_1\in(0,\infty)$ such that for all $k \in\{0,1,\ldots N-1\}$,
\begin{equation}
\begin{aligned}
  | \appx_{t_{k+1}} |^p & \leq c_1\Bigl( |  \appx_{t_{k}}|^p + | \mu( \appx_{t_{k}})|^p\cdot  \delta^p
  +   | \sigma(\appx_{t_k}) |^p \cdot | W_{t_{k+1}}-W_{t_k}  |^p\notag\\
&\quad +| \rho(\appx_{t_k}) |^p \cdot | N_{t_{k+1}}-N_{t_k}  |^p\Bigr).
\end{aligned}
 \end{equation} 
Furthermore there exists a constant $c_2\in(0,\infty)$ such that
\begin{equation}
	\E[|W_{t_{k+1}}-W_{t_k}|^p] \leq c_2 \delta^{p/2}
\end{equation}
and by \cite[Inequality (3.20)]{dflm},
\begin{equation}\label{est-N}
	\E[|N_{t_{k+1}}-N_{t_k}|^p] \leq c_2 \delta.
\end{equation}
This and the linear growth of $\mu$, $\sigma$, and $\rho$ imply the existence of a constant $c_3\in(0,\infty)$ such 
that $$  \E \left[|\appx_{t_{k+1}} |^p\right] \leq c_3 \left(1+ \E \left[|  \appx_{t_{k}}|^p\right]\right). $$
Since $\E[|X_0|^p]<\infty$ it follows that
\begin{align}   \label{finite_m_1} \max\limits_{k\in\{0,1,\ldots,N\}}\E \left[| \appx_{t_{k}} |^p \right]< \infty.   \end{align}
We also have for $s \in [0,T]$ that there exists a constant $c_4\in(0,\infty)$ such that
\begin{equation}
\begin{aligned}
  \sup_{t \in [0,s]} |\appx_t|^p& \leq c_4\bigg( |X_0|^p+ \sup_{t \in [0,s]} \Big | \int_0^t  \mu(\appx_{\eu}) du  \Big|^p
 \\&\quad+
   \sup_{t \in [0,s]} \Big | \int_0^t  \sigma(\appx_{\eu}) dW_u  \Big|^p+  \sup_{t \in [0,s]} \Big | \int_0^t  \rho(\appx_{\eu}) dN_u  \Big|^p\bigg).
\end{aligned}
\end{equation}
The Burkholder-Davis-Gundy inequality, Doob's maximal inequality for c\'adl\'ag martingales, \cite[Lemma 2.1]{maghsoodi1996}, and the linear  growth condition on $\sigma$, $\rho$ ensure the existence of constants $c_5,c_6\in(0,\infty)$ such that
\begin{equation}
	\E\bigg[\sup_{t \in [0,s]} \Big | \int_0^t  \sigma(\appx_{\eu}) dW_u  \Big|^p\bigg]\leq c_5\, \E\!\left[\int_0^s |\sigma(\appx_{\eu})|^pdu\right]\leq c_6\left(1+\int_0^s\E\!\left[|\appx_{\eu}|^p\right] du\right)
\end{equation}
and since $N_u=\tilde N_u+\kappa_u$,
\begin{equation}
	\E\bigg[\sup_{t \in [0,s]} \Big | \int_0^t  \rho(\appx_{\eu}) dN_u  \Big|^p\bigg]\leq c_5 \E\!\left[\int_0^s |\rho(\appx_{\eu})|^p du\right]\leq c_6\left(1+\int_0^s\E\!\left[|\appx_{\eu}|^p\right] du\right).
\end{equation}
Since $\mu$ is of at most linear growth, an analogous estimate for the Lebesgue integral holds.
Hence,
\begin{align} \label{eq_moment_2} \E \bigg[ \sup_{t \in [0,s]} |\appx_t|^p \bigg] \leq c_2 \left(1 +  \int_0^s  \E\! \left[| \appx_{\eu} |^p \right] du \right), \quad s \in [0,T]. \end{align}
By  \eqref{finite_m_1} we now obtain that
\begin{align}   \label{finite_m_2}  \E \bigg[ \sup_{t \in [0,T]} |\appx_t|^p \l \bigg] < \infty .   \end{align}
Equation \eqref{eq_moment_2} also yields
$$  \E  \bigg[ \sup_{t \in [0,s]} |\appx_t  |^p  \bigg]  \leq c_2  + c_2 \int_0^s   \E  \bigg[ \sup_{t \in [0,u]} |\appx_t  |^p \bigg] du.$$
Since  \eqref{finite_m_2} holds and the function $[0,T]\ni t\mapsto \E  \Big[ \sup_{s \in [0,t]} |\appx_s  |^p\Bigr]$ is Borel measurable (as a nondecreasing mapping),   Gronwall's Lemma yields the first assertion.

For the second statement note that for all $s,t\in [0,T]$, $s<t$, there exists a constant $c_7\in(0\infty)$ so that it holds
\begin{equation}
\begin{aligned}
\E\big[ | \appx_t - \appx_s|^{p}\big]& \leq  c_7\bigg( \E\bigg[\Big | \int_s^t \mu(\appx_{\eu}) du \Big |^p\bigg]+\E\bigg[\Big | \int_s^t \sigma(\appx_{\eu}) dW_u \Big |^p\bigg]+\E\bigg[\Big | \int_s^t \rho(\appx_{\eu}) dN_u \Big |^p\bigg]\bigg).
\end{aligned}
\end{equation}
The H\"older inequality, the Burkholder-Davis-Gundy inequality, \cite[Lemma 2.1]{maghsoodi1996}, and the linear growth condition of the coefficients together with the first assertion yield the statement.
\end{proof}

Note that we consider the $L^2$-error and not the $L^p$-error as in the jump-free case in \cite{muellergronbach2019}, since due to \eqref{est-N} we will not get a better estimate for $p>2$ anyhow, cf.~\cite[Remark 3.14]{dflm}.
For later use we define $C^{(\text{M})} = \max\{C_p^{(\text{M})}\colon p\in\{1,\dots,8\}\}$.

\subsubsection{Estimation of the occupation time of the Euler-Maruyama process}
In this subsection we need to make the dependence on the initial value explicit in the notation. For all $x\in\mathbb{R}$ denote by $X^x$ the unique strong solution of \eqref{eq:SDE} with initial condition $X^x_0=x$ and by $X^{(\delta),x}$ the solution of the time-continuous version of the Euler-Maruyama scheme \eqref{euler} starting at $\appxx_0=x$.
Note that from the proof of Lemma \ref{msq-appx} it follows that there exists $ C^{(\text{I})}\in (0,\infty)$ such that for all $x\in\mathbb{R}$, $s,t\in [0,T]$, $s<t$, $\delta \in(0,1)$,
\begin{equation} \label{mom_est_euler_x}
	\Big(\E\Big[\sup\limits_{0\leq t\leq T}|\appxx_t|^2\Big]\Big)^{\!1/2}\leq C^{(\text{I})}(1+|x|),
\end{equation}
\begin{equation}  \label{mean_sqrt_reg_x}
	\big(\E\big[|\appxx_{t}-\appxx_{s}|^2\big]\big)^{\!1/2}\leq C^{(\text{I})}(1+|x|)|t-s|^{1/2}.
\end{equation}
\begin{lemma}\label{occup_time}
	Let Assumptions \ref{ass:ex-un} hold. Then there exists $C^{(\text{O})}\in (0,\infty)$ such that for all $k\in\{1,\dots,m\}$, $x\in\mathbb{R}$, 
$\delta\in(0,1)$, $\varepsilon \in (0,\infty)$ it holds
	\begin{equation}
		\int\limits_0^T\mathbb{P}(|X_t^{(\delta),x}-\zeta_k|\leq\varepsilon)dt\leq C^{(\text{O})}(1+x^2)(\varepsilon+\delta^{1/2}).
	\end{equation}
\end{lemma}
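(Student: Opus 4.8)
The plan is to estimate the occupation-time integral by comparing the Euler--Maruyama process $\appxx$, frozen on a grid interval, with the Brownian increment that dominates its local behaviour near a discontinuity point $\zeta_k$. First I would split the time integral over the grid cells: for $t\in[t_j,t_{j+1})$ we have $\appxx_t=\appxx_{t_j}+\mu(\appxx_{t_j})(t-t_j)+\sigma(\appxx_{t_j})(W_t-W_{t_j})+\rho(\appxx_{t_j})(N_t-N_{t_j})$, so conditionally on $\cF_{t_j}$ the only continuously-distributed contribution is the Gaussian term $\sigma(\appxx_{t_j})(W_t-W_{t_j})$. The key difficulty, and the reason the $\sqrt\delta$ term appears, is that $\sigma(\appxx_{t_j})$ may vanish or be tiny; one cannot simply divide by it. I would therefore follow \cite{muellergronbach2019}: introduce the event $\{|\sigma(\appxx_{t_j})|\le \varepsilon^{1/2}\delta^{1/4}\}$ (or a similar threshold) on which one estimates crudely, against the complementary event on which the Gaussian density is bounded by $C/(|\sigma(\appxx_{t_j})|\sqrt{t-t_j})$.

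Second, on the ``good'' event I would condition on $\cF_{t_j}$ and bound
$\P(|\appxx_t-\zeta_k|\le\varepsilon\mid\cF_{t_j})$ by the supremum of the Gaussian density times $2\varepsilon$, giving a bound of order $\varepsilon/(|\sigma(\appxx_{t_j})|\sqrt{t-t_j})$, and then integrate in $t$ over $[t_j,t_{j+1}]$ to produce $\varepsilon\sqrt\delta/|\sigma(\appxx_{t_j})|$ per cell, i.e.\ $\varepsilon\sqrt\delta\cdot(T/\delta)=\varepsilon T/\sqrt\delta$ after summing --- which is too large. The correct route is instead to sum the per-cell bounds $2\varepsilon \int_{t_j}^{t_{j+1}} (2\pi)^{-1/2}|\sigma(\appxx_{t_j})|^{-1}(t-t_j)^{-1/2}\,dt$ only after taking expectations and using that on the good event $|\sigma(\appxx_{t_j})|$ is bounded below; this yields the $\varepsilon\delta^{1/2}$ contribution, with the $(1+x^2)$ factor entering through the moment bound \eqref{mom_est_euler_x} when one estimates the probability of the bad event via $\P(|\sigma(\appxx_{t_j})|\le\varepsilon^{1/2}\delta^{1/4})$.

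Third, on the ``bad'' event $\{|\sigma(\appxx_{t_j})|\le \varepsilon^{1/2}\delta^{1/4}\}$ I would use that $\sigma$ is Lipschitz and $\sigma(\zeta_k)\ne0$, so this event forces $\appxx_{t_j}$ to be at distance at least $c:=|\sigma(\zeta_k)|/L_\sigma - \varepsilon^{1/2}\delta^{1/4}/L_\sigma$ from $\zeta_k$; for $\varepsilon,\delta$ small this is bounded below by a positive constant, so on this event $|\appxx_t-\zeta_k|\le\varepsilon$ can only happen if $\appxx$ moves by a macroscopic amount between $t_j$ and $t$. Using \eqref{mean_sqrt_reg_x} and Chebyshev, $\P(|\appxx_t-\zeta_k|\le\varepsilon,\ |\sigma(\appxx_{t_j})|\le\varepsilon^{1/2}\delta^{1/4})\le \P(|\appxx_t-\appxx_{t_j}|\ge \text{const})\le C^{(\text{I})2}(1+|x|)^2|t-t_j|/\text{const}^2\le C(1+x^2)\delta$, and integrating over $[0,T]$ gives an $O((1+x^2)\delta)$ bound, absorbing into $\delta^{1/2}$.

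Finally I would combine the two contributions, choose the threshold exponents consistently (the balance $\varepsilon^{1/2}\delta^{1/4}$ is exactly what makes the good-event density bound contribute $\varepsilon\delta^{1/2}/(\varepsilon^{1/2}\delta^{1/4})\cdot$ something of the right order, while the bad-event probability stays $O(\delta)$), and collect the $(1+x^2)$ prefactors from \eqref{mom_est_euler_x}--\eqref{mean_sqrt_reg_x}. The main obstacle is handling the possibly-degenerate $\sigma$: the whole proof hinges on the good/bad decomposition at the threshold $|\sigma(\appxx_{t_j})|\approx\varepsilon^{1/2}\delta^{1/4}$ and on checking that the conditional Gaussian estimate survives the presence of the independent Poisson increment $\rho(\appxx_{t_j})(N_t-N_{t_j})$ --- but since that jump term is $\cF_{t_j}\vee\sigma(N)$-measurable and independent of $W_t-W_{t_j}$, one can condition on it as well and the Gaussian density bound is unaffected. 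The jumps therefore cause no essential new difficulty beyond bookkeeping, which is the point of the paper's ``extend everything to the jump case'' strategy.
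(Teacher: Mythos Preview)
Your approach has a genuine gap, and it is not the one the paper uses.

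The per-cell conditional Gaussian bound you describe,
\[
\P\bigl(|\appxx_t-\zeta_k|\le\varepsilon\,\bigm|\,\cF_{t_j}\bigr)\le \frac{2\varepsilon}{\sqrt{2\pi}\,|\sigma(\appxx_{t_j})|\sqrt{t-t_j}},
\]
after integrating over $[t_j,t_{j+1}]$ gives a contribution of order $\varepsilon\sqrt\delta/|\sigma(\appxx_{t_j})|$ per cell. Summing over the $T/\delta$ cells yields a quantity of order $\varepsilon/\sqrt\delta$ (even if $|\sigma(\appxx_{t_j})|$ is bounded below by a \emph{fixed} constant), which blows up as $\delta\to0$. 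You notice this, but your proposed fix --- ``take expectations first and use the lower bound on $|\sigma|$'' --- does not change the arithmetic: taking expectations does not alter the order, and plugging in the threshold $|\sigma(\appxx_{t_j})|\ge\varepsilon^{1/2}\delta^{1/4}$ makes it worse, not better ($T\varepsilon^{1/2}\delta^{-3/4}$). The fundamental problem is that conditioning on $\cF_{t_j}$ discards the information that $\appxx_{t_j}$ is typically \emph{far} from $\zeta_k$; a density bound on the increment alone cannot recover the $O(\varepsilon)$ occupation-time estimate. Already for pure Brownian motion ($\mu=\rho=0$, $\sigma\equiv1$, where the Euler scheme is exact), your per-cell bound gives $O(\varepsilon\delta^{-1/2})$ while the true occupation time is $O(\varepsilon)$.

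The paper takes a completely different route: it applies the Tanaka--Meyer formula to express the local time $L_t^a(\appxx)$ of the semimartingale $\appxx$, bounds $\E[L_t^a(\appxx)]\le c(1+|x|)$ uniformly in $a$ (the jump terms are handled via the compensated Poisson process and moment bounds), and then invokes the occupation-time formula
\[
\E\Bigl[\int_0^T \mathbf 1_{[\zeta_k-\varepsilon,\zeta_k+\varepsilon]}(\appxx_s)\,|\sigma(\appxx_{\es})|^2\,ds\Bigr]
=\int_{\R}\mathbf 1_{[\zeta_k-\varepsilon,\zeta_k+\varepsilon]}(a)\,\E[L_T^a(\appxx)]\,da
\le c(1+|x|)\cdot 2\varepsilon.
\]
The $\delta^{1/2}$ term enters only when one replaces $\sigma(\appxx_{\es})$ by $\sigma(\appxx_s)$ inside the integrand (an $L^1$ error of order $(1+x^2)\delta^{1/2}$ by the regularity estimate \eqref{mean_sqrt_reg_x}); one then uses (ass-$\sigma$), namely $\sigma(\zeta_k)\ne0$, to bound $\sigma^2(\appxx_s)$ below by a constant on the event $\{|\appxx_s-\zeta_k|\le\varepsilon\}$ for $\varepsilon$ small, and divides through. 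The local-time formula is precisely the device that aggregates information across cells and produces the correct $O(\varepsilon)$ scaling; your cellwise density argument cannot substitute for it.
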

\begin{proof} By \cite[Lemma 158]{situ2006} we have for all $a\in \R$ that 
\begin{equation*}
\begin{aligned}
	&L_t^a(\appxx)=|\appxx_t-a|-|x-a|-\int\limits_0^t \sgn(\appxx_{s-}-a)d\appxx_s\\
	&\qquad-\int\limits_0^t\Bigl(|\appxx_{s-}+\rho(\appxx_{\es})-a|-|\appxx_{s-}-a|-\sgn(\appxx_{s-}-a)\rho(\appxx_{\es})\Bigr)dN_s,
	\end{aligned}
\end{equation*}
where $L_t^a(\appxx)$ is the local time of the semi-martingale $\appxx$ in $a$. Since $L_t^a(\appxx)=|L_t^a(\appxx)|\geq 0$, we get
\begin{equation}\label{helpeqn3}
\begin{aligned}
	&L_t^a(\appxx)\leq |\appxx_t-x|+\Bigl|\int\limits_0^t \sgn(\appxx_{s-}-a)d\appxx_s\Bigl|\\
	&\qquad+\Bigl|\int\limits_0^t\Bigl(|\appxx_{s-}+\rho(\appxx_{\es})-a|-|\appxx_{s-}-a|-\sgn(\appxx_{s-}-a)\rho(\appxx_{\es})\Bigr)dN_s\Bigl|.
	\end{aligned}
\end{equation}
By Lemma \ref{lin_growth_c} there exists $c_1\in(0,\infty)$ such that
\begin{equation}\label{helpeqn2}
\begin{aligned}
	&\Bigl|\int\limits_0^t \sgn(\appxx_{s-}-a)d\appxx_s\Bigl|\leq \int\limits_0^t\Bigl(|\mu(\appxx_{\es})|+\|\lambda\|_{\infty}|\rho(\appxx_{\es})|\Bigr)ds\\
	&\qquad+\Bigl|\int\limits_0^t \sgn(\appxx_{s-}-a)\sigma(\appxx_{\es})dW_s\Bigl|
	+\Bigl|\int\limits_0^t \sgn(\appxx_{s-}-a)\rho(\appxx_{\es})d\tilde N_s\Bigl|\\
	&\quad\leq c_1(1+\sup\limits_{t\in[0,T]}|\appxx_t|)+\Bigl|\int\limits_0^t \sgn(\appxx_{s-}-a)\sigma(\appxx_{\es})dW_s\Bigl|\\
	&\qquad+\Bigl|\int\limits_0^t \sgn(\appxx_{s-}-a)\rho(\appxx_{\es})d\tilde N_s\Bigl|
	\end{aligned}
\end{equation}
and
\begin{equation}\label{helpeqn1}
 \begin{aligned}
&\Bigl|\int\limits_0^t\Bigl(|\appxx_{s-}+\rho(\appxx_{\es})-a|-|\appxx_{s-}-a|-\sgn(\appxx_{s-}-a)\rho(\appxx_{\es})\Bigr)dN_s\Bigl|\\
&\quad\leq\Bigl|\int\limits_0^t\Bigl(|\appxx_{s-}+\rho(\appxx_{\es})-a|-|\appxx_{s-}-a|-\sgn(\appxx_{s-}-a)\rho(\appxx_{\es})\Bigr)d\tilde N_s\Bigl|\\
&\qquad+\Bigl|\int\limits_0^t\Bigl(|\appxx_{s-}+\rho(\appxx_{\es})-a|-|\appxx_{s-}-a|-\sgn(\appxx_{s-}-a)\rho(\appxx_{\es})\Bigr)\lambda(s)ds \Bigl|\\
&\quad\leq\Bigl|\int\limits_0^t\Bigl(|\appxx_{s-}+\rho(\appxx_{\es})-a|-|\appxx_{s-}-a|-\sgn(\appxx_{s-}-a)\rho(\appxx_{\es})\Bigr)d\tilde N_s\Bigl|\\
&\qquad+2\|\lambda\|_{\infty}\int\limits_0^t|\rho(\appxx_{\es})|ds\\
&\quad\leq \Bigl|\int\limits_0^t\Bigl(|\appxx_{s-}+\rho(\appxx_{\es})-a|-|\appxx_{s-}-a|-\sgn(\appxx_{s-}-a)\rho(\appxx_{\es})\Bigr)d\tilde N_s\Bigl|\\
&\qquad+c_1\Bigl(1+\sup\limits_{t\in[0,T]}|\appxx_t|\Bigr). 
	\end{aligned}
\end{equation}
By \eqref{mom_est_euler_x} there exists $c_2\in(0,\infty)$ such that
\begin{equation*}
\begin{aligned}
	\E\bigg[\Bigl|\int\limits_0^t \sgn(\appxx_{s-}-a)\sigma(\appxx_{\es})dW_s\Bigl|^2\bigg]&\leq \E\bigg[\int\limits_0^t|\sigma(\appxx_{\es})|^2ds\bigg]\leq c_2(1+|x|)^2,\\
	\E\bigg[\Bigl|\int\limits_0^t \sgn(\appxx_{s-}-a)\rho(\appxx_{\es})d\tilde N_s\Bigl|^2\bigg]&\leq \|\lambda\|_{\infty}\E\bigg[\int\limits_0^t|\rho(\appxx_{\es})|^2ds\bigg]\leq c_2(1+|x|)^2,
	\end{aligned}
\end{equation*}
and
\begin{equation*}
\begin{aligned}
&	\E\bigg[\Bigl|\int\limits_0^t\Bigl(|\appxx_{s-}+\rho(\appxx_{\es})-a|-|\appxx_{s-}-a|-\sgn(\appxx_{s-}-a)\rho(\appxx_{\es})\Bigr)d\tilde N_s\Bigl|^2\bigg]\\
&\quad=\E\bigg[\int\limits_0^t\Bigl(|\appxx_{s-}+\rho(\appxx_{\es})-a|-|\appxx_{s-}-a|-\sgn(\appxx_{s-}-a)\rho(\appxx_{\es})\Bigr)^2\lambda(s)ds\bigg]\\
&\quad\leq 2\|\lambda\|_{\infty}\E\bigg[\int\limits_0^t \Bigl||\appxx_{s-}+\rho(\appxx_{\es})-a|-|\appxx_{s-}-a|\Bigl|^2ds\bigg]+2\|\lambda\|_{\infty}\E\bigg[\int\limits_0^t|\rho(\appxx_{\es})|^2ds\bigg]\\
&\quad\leq 4\|\lambda\|_{\infty}\E\bigg[\int\limits_0^t|\rho(\appxx_{\es})|^2ds\bigg]
\leq
c_2(1+|x|)^2.
	\end{aligned}
\end{equation*}
Together with \eqref{helpeqn2} respectively \eqref{helpeqn1} this gives that there exist constants $c_3,c_4\in(0,\infty)$ such that
\begin{equation*}
\begin{aligned}
	&\E\bigg[\Bigl|\int\limits_0^t \sgn(\appxx_{s-}-a)d\appxx_s\Bigl|\bigg]\leq c_3(1+|x|)+\Biggl(\E\bigg[\Bigl|\int\limits_0^t \sgn(\appxx_{s-}-a)\sigma(\appxx_{\es})dW_s\Bigr|^2\bigg]\Biggr)^{\!1/2}\\
	&\qquad+\Biggl(\E\bigg[\Bigl|\int\limits_0^t \sgn(\appxx_{s-}-a)\rho(\appxx_{\es})d\tilde N_s\Bigl|^2\bigg]\Biggr)^{\!1/2}\leq c_4(1+|x|),
	\end{aligned}
\end{equation*}
respectively
\begin{equation*}
\begin{aligned}
&\E\bigg[\Bigl|\int\limits_0^t\Bigl(|\appxx_{s-}+\rho(\appxx_{\es})-a|-|\appxx_{s-}-a|-\sgn(\appxx_{s-}-a)\rho(\appxx_{\es})\Bigr)dN_s\Bigl|\bigg]\\
&\quad\leq\Bigg(\E\bigg[\Bigl|\int\limits_0^t\Bigl(|\appxx_{s-}+\rho(\appxx_{\es})-a|-|\appxx_{s-}-a|-\sgn(\appxx_{s-}-a)\rho(\appxx_{\es})\Bigr)d\tilde N_s\Bigl|^2\bigg]\Bigg)^{1/2}\\
&\qquad+	c_3\Bigl(1+\E\Big[\sup\limits_{t\in[0,T]}|\appxx_t|\Big]\Bigr)
\le c_4(1+|x|).
\end{aligned}
\end{equation*}
Combining these estimates with \eqref{helpeqn3} shows
\begin{equation}
	\E[L_t^a(\appxx)]\leq c_4(1+|x|).
\end{equation}
Note that the continuous martingale part of the semi-martingale \eqref{euler} starting at $\appxx_0=x$ is given by
\begin{equation}
	M_t=\int\limits_0^t\sigma(\appxx_{\es})dW_s, \quad t\in [0,T].
\end{equation}
and its predictable quadratic variation is
\begin{equation}
	\langle M\rangle_t=\int\limits_0^t|\sigma(\appxx_{\es})|^2ds, \quad t\in [0,T].
\end{equation}
Therefore, by \cite[Lemma 159]{situ2006}, we have for all $\varepsilon\in (0,\infty)$, $t\in [0,T]$ that
\begin{equation}\label{helpeqn4}
\begin{aligned}
&\E\bigg[\int\limits_0^t \mathbf{1}_{[\zeta_k -\varepsilon,\zeta_k +\varepsilon]}(\appxx_s)\cdot|\sigma(\appxx_{\es})|^2ds\bigg]=\E\bigg[\int\limits_0^t \mathbf{1}_{[\zeta_k -\varepsilon,\zeta_k +\varepsilon]}(\appxx_s)d\langle M\rangle_s\bigg]\\
&\quad=\int\limits_{\mathbb{R}}\mathbf{1}_{[\zeta_k -\varepsilon,\zeta_k +\varepsilon]}(a)\cdot\E[L_t^a(\appxx)]da\leq 2c_4(1+|x|)\cdot\varepsilon.
\end{aligned}
\end{equation}
Since $\sigma$ is Lipschitz and of at most linear growth, 
\begin{equation*}
\begin{aligned}
&|\sigma^2(\appxx_s)-\sigma^2(\appxx_{\es})|\leq |\sigma(\appxx_s)-\sigma(\appxx_{\es})|\cdot (|\sigma(\appxx_s)|+|\sigma(\appxx_{\es})|)\notag\\
&\quad\leq 2 c_\sigma|\appxx_s-\appxx_{\es}|\cdot \Big(1+\sup\limits_{t\in[0,T]}|\appxx_t|\Big).
\end{aligned}
\end{equation*}
Hence, by the Cauchy-Schwarz inequality, \eqref{mom_est_euler_x}, and \eqref{mean_sqrt_reg_x} there exists $c_5\in(0,\infty)$ such that
\begin{equation*}
\begin{aligned}
&\E\!\left[|\sigma^2(\appxx_s)-\sigma^2(\appxx_{\es})|\right]
\leq 2c_\sigma\left(\E[\ |\appxx_s-\appxx_{\es}|^2]\right)^{1/2}\cdot \Bigl(\E\!\ \Bigl(1+\sup\limits_{0\leq t \leq T}|\appxx_t|\Bigr)^2\Bigr)^{1/2}\\
&\quad\leq 2c_\sigma(\E\!\ |\appxx_s-\appxx_{\es}|^2)^{1/2}\cdot \Big(1+\Bigl(\E\Big[\sup\limits_{t\in[0,T]}|\appxx_t|^2\Big]\Bigr)^{\!1/2}\Bigr)
\leq 2c_5 (1+|x|)^2\cdot |s-{\es}|^{1/2}.
\end{aligned}
\end{equation*}
Thus we have for all $t\in [0,T]$,
\begin{equation}\label{helpeqn5}
\E\bigg[\int\limits_0^t|\sigma^2(\appxx_s)-\sigma^2(\appxx_{\es})|ds\bigg]\leq c_5(1+|x|)^2\sum\limits_{k=0}^{N-1}\int\limits_{t_k}^{t_{k+1}}(s-t_k)^{1/2}ds
\leq c_5 T (1+x^2)\delta^{1/2}.
\end{equation}
From the continuity of $\sigma$ and by the assumption that $\sigma(\zeta_k )\neq 0$, we get that there exist $c_6,\varepsilon_0\in (0,\infty)$ such that
\begin{equation}
	\inf\limits_{z\in (\zeta_k -\varepsilon_0,\zeta_k +\varepsilon_0)}\sigma^2(z)\geq c_6.
\end{equation}
Combining this with \eqref{helpeqn4} and \eqref{helpeqn5} we get that there exists $c_7\in(0,\infty)$ such that for all $\varepsilon\in (0,\varepsilon_0)$,
\begin{equation*}
\begin{aligned}
&\int\limits_0^T \mathbb{P}\Bigl( |\appxx_t-\zeta_k |\leq \varepsilon\Bigr)dt
=\frac{1}{c_6}\E\bigg[\int\limits_0^Tc_6 \mathbf{1}_{[\zeta_k -\varepsilon,\zeta_k +\varepsilon]}(\appxx_t)dt\bigg]\\
&\quad\leq \frac{1}{c_6}\E\bigg[\int\limits_0^T \mathbf{1}_{[\zeta_k -\varepsilon,\zeta_k +\varepsilon]}(\appxx_t)\sigma^2(\appxx_t)dt\bigg]\\
&\quad= \frac{1}{c_6}\E\bigg[\int\limits_0^T \mathbf{1}_{[\zeta_k -\varepsilon,\zeta_k +\varepsilon]}(\appxx_t)\sigma^2(\appxx_{\et})dt\bigg]\\
&\qquad+\frac{1}{c_6}\E\bigg[\int\limits_0^T \mathbf{1}_{[\zeta_k -\varepsilon,\zeta_k +\varepsilon]}(\appxx_t)(\sigma^2(\appxx_t)-\sigma^2(\appxx_{\et}))dt\bigg]\\
&\quad\leq \frac{1}{c_6}\E\bigg[\int\limits_0^T \mathbf{1}_{[\zeta_k -\varepsilon,\zeta_k +\varepsilon]}(\appxx_t)\sigma^2(\appxx_{\et})dt\bigg]\\
&\qquad+\frac{1}{c_6}\E\bigg[\int\limits_0^T |\sigma^2(\appxx_t)-\sigma^2(\appxx_{\et})|dt\bigg]\\
&\quad\leq 2c_4(1+|x|)\varepsilon+2c_5 T (1+x^2)\delta^{1/2}\leq c_7(1+x^2)(\varepsilon+\delta^{1/2}).
\end{aligned}
\end{equation*}
For $\varepsilon\in [\varepsilon_0,\infty)$ it trivially holds that
\begin{equation*}
	\int\limits_0^T\mathbb{P}(|X_t^{(\delta),x}-\zeta_k |\leq\varepsilon)dt\leq\ T=\frac{T}{\varepsilon_0} \cdot\varepsilon_0 \le \frac{T}{\varepsilon_0}(1+x^2)(\varepsilon+\delta^{1/2}).
\end{equation*}
Choosing $C^{(\text{O})}=\max\{c_7,\frac{T}{\varepsilon_0}\}$ closes the proof.

\end{proof}
\subsubsection{Estimation of the discontinuity crossing probability}

Note that as in \cite{kds} from now on we write 
$X_{t}$ instead of $X_{t-}$. This is vindicated by
the continuity of the compensators of $W$ and $N$.

Let for all $k\in\{1,\dots,m\}$, $t\in[0,T]$,
$
\mathcal{Z}_k^t=\{\omega\in\Omega\colon(\appx_\et(\omega)-\zeta_k)(\appx_t(\omega)-\zeta_k)\le0\}.
$

\begin{lemma}\label{help0-cross}
Let Assumptions \ref{ass:ex-un} hold. Let $s,t\in[0,T]$ with $\et-s\ge \delta$.
There exists a constant ${C_1}\in(0,\infty)$ such that for all $k\in\{1,\dots,m\}$, $\delta\in(0,1)$ sufficiently small,
\begin{equation*}
 \P(\mathcal{Z}_k^s\cap \mathcal{Z}_k^t) 
\le
C_1\P(\mathcal{Z}_k^s)\delta + {C_1} \cdot \int_\R \P\!\left(\mathcal{Z}_k^s\cap  \left\{|\appx_{\et-(t-\et)}-\zeta_k| \le C_1 \delta^{1/2}(1+|z|)\right\}\right) \cdot e^{-\frac{z^2}{2}} dz.
\end{equation*}
\end{lemma}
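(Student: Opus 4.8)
The plan is to condition on $\cF_{\et-\delta}$, strip off two exceptional events of probability $O(\delta)$, and reduce the main part to a one–step Gaussian crossing estimate. Throughout put $h:=t-\et\in[0,\delta)$ (the case $h=0$ is trivial, so assume $h>0$; note $\et-h=\et-(t-\et)$). Since $\et-s\ge\delta$ we have $s\le\et-\delta$, hence $\mathcal{Z}_k^s\in\cF_s\subseteq\cF_{\et-\delta}$ and
\[
\P(\mathcal{Z}_k^s\cap\mathcal{Z}_k^t)=\E\bigl[\mathbf{1}_{\mathcal{Z}_k^s}\,\P(\mathcal{Z}_k^t\mid\cF_{\et-\delta})\bigr].
\]

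I split $\mathcal{Z}_k^t$ according to the events $\mathcal A=\{N_t-N_{\et-\delta}\ge1\}$ (a jump in the window), $\mathcal B=\{N_t-N_{\et-\delta}=0\}\cap\{\max(|Z_0|,|Z_1|,|Z_2|)>\delta^{-1/4}\}$, where $Z_0,Z_1,Z_2$ are the standardized increments of $W$ over $[\et-\delta,\et-h]$, $[\et-h,\et]$, $[\et,t]$, and the complementary "good" event $\mathcal G$. Since $N_t-N_{\et-\delta}$ and $W$ on $(\et-\delta,t]$ are independent of $\cF_{\et-\delta}$, one gets $\P(\mathcal A\mid\cF_{\et-\delta})\le\int_{\et-\delta}^t\lambda(u)\,du\le2\|\lambda\|_\infty\delta$ and $\P(\mathcal B\mid\cF_{\et-\delta})\le 6e^{-\delta^{-1/2}/2}\le\delta$ for $\delta$ small; after multiplying by $\mathbf{1}_{\mathcal{Z}_k^s}$ and taking expectations these two contribute the term $C_1\P(\mathcal{Z}_k^s)\delta$.

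On $\mathcal G\cap\mathcal{Z}_k^t$ I use the scheme \eqref{euler}: with no jump in the window, $\appx_\et=\appx_{\et-h}+\mu(\appx_{\et-\delta})h+\sigma(\appx_{\et-\delta})\sqrt h\,Z_1$ and $\appx_t=\appx_\et+\mu(\appx_\et)h+\sigma(\appx_\et)\sqrt h\,Z_2$, and since on $\mathcal{Z}_k^t$ the values $\appx_\et,\appx_t$ straddle $\zeta_k$, $|\appx_\et-\zeta_k|\le|\appx_t-\appx_\et|\le|\mu(\appx_\et)|h+|\sigma(\appx_\et)|\sqrt h\,|Z_2|$. Inserting the linear growth from Lemma \ref{lin_growth_c}, the bound $\max(|Z_0|,|Z_1|,|Z_2|)\le\delta^{-1/4}$ and $\delta$ small, a short bootstrap shows that $\appx_\et$, then $\appx_{\et-\delta}$, then $\appx_{\et-h}$ all lie in a fixed compact neighbourhood of $\zeta_k$ whose radius depends only on $\zeta_k$ and the coefficients. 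On that neighbourhood $|\mu|,|\sigma|\le M_k$, so the previous estimate improves to $|\appx_\et-\zeta_k|\le M_k(\delta+\sqrt\delta\,|Z_2|)$ and, adding $|\appx_{\et-h}-\appx_\et|\le M_k(\delta+\sqrt\delta\,|Z_1|)$,
\[
\mathcal G\cap\mathcal{Z}_k^t\subseteq\bigl\{\,|\appx_{\et-h}-\zeta_k|\le C_k\,\delta^{1/2}(1+|Z_1|+|Z_2|)\,\bigr\}.
\]
I regard this localization as the main obstacle: it is precisely the boundedness of $\sigma$ near $\zeta_k$ that makes the right–hand side affine in $(Z_1,Z_2)$; a naive argument only yields $|\appx_{\et-h}-\zeta_k|\lesssim\delta^{1/2}(1+|Z_1|)(1+|Z_2|)$, whose Gaussian tail is too heavy to be absorbed by the single–Gaussian integral in the statement.

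Finally I assemble. Combining the three pieces and using $\mathcal{Z}_k^s\in\cF_{\et-\delta}$ again,
\[
\P(\mathcal{Z}_k^s\cap\mathcal{Z}_k^t)\le C_1\P(\mathcal{Z}_k^s)\delta+\P\!\left(\mathcal{Z}_k^s\cap\bigl\{|\appx_{\et-h}-\zeta_k|\le C_k\delta^{1/2}(1+|Z_1|+|Z_2|)\bigr\}\right).
\]
Since $1+|Z_1|+|Z_2|\le2(1+|Z_j|)$ for $j=1$ and for $j=2$, the last event is contained in the union of $\{|\appx_{\et-h}-\zeta_k|\le2C_k\delta^{1/2}(1+|Z_j|)\}$, $j\in\{1,2\}$. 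Each $Z_j$ is an increment of $W$ after time $\et-h$, hence independent of $\cF_{\et-h}$, which contains both $\mathcal{Z}_k^s$ and $\appx_{\et-h}$; conditioning on the value of $Z_j$ therefore turns the corresponding piece into $\tfrac{1}{\sqrt{2\pi}}\int_\R\P\!\left(\mathcal{Z}_k^s\cap\{|\appx_{\et-h}-\zeta_k|\le2C_k\delta^{1/2}(1+|z|)\}\right)e^{-z^2/2}\,dz$. Taking $C_1$ large enough to dominate $\max_{1\le k\le m}2C_k$, $2\|\lambda\|_\infty+1$ and $1$ yields the assertion.
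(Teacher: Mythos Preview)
Your proof is correct and follows essentially the same route as the paper: strip off the jump event and a Gaussian-tail event (each of conditional probability $O(\delta)$), then use the one-step Euler dynamics together with a bootstrap localization to obtain the inclusion $\mathcal G\cap\mathcal Z_k^t\subseteq\{|\appx_{\et-h}-\zeta_k|\le C\delta^{1/2}(1+|Z_1|+|Z_2|)\}$, and finally pass to a single Gaussian integral using independence of $(Z_1,Z_2)$ from $\cF_{\et-h}$. The only cosmetic differences are your cutoff $\delta^{-1/4}$ (the paper uses $\sqrt{2\log(T/\delta)}$) and your union-bound reduction $1+|Z_1|+|Z_2|\le 2(1+\max_j|Z_j|)$ in place of the paper's rotation $(z_1,z_2)\mapsto (z_1+z_2)/\sqrt{2}$; both choices are equivalent for the purpose of this lemma.
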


\begin{proof}
For treating the Gaussian part we adopt arguments from \cite[Proof of Lemma 5]{muellergronbach2019}.

If $t=\et$, then for all $c_1 \in (0,\infty)$, $z\in\R$ it holds that
\begin{equation*}
\mathcal{Z}_k^t=\{\appx_\et-\zeta_k=0\} \subseteq \left\{|\appx_{\et-(t-\et)}-\zeta_k| \le c_1 \delta^{1/2} (1+|z|)\right\}.
\end{equation*}
So in this case, the assertion of the lemma holds for all ${C_1}\ge 1/\sqrt{2\pi}$.

Now let $t>\et$ and let
\begin{equation*}
\bar W_1=\frac{W_t-W_\et}{\sqrt{t-\et}}, \qquad \bar W_2=\frac{W_\et-W_{\et-(t-\et)}}{\sqrt{t-\et}}, \qquad \bar W_3=\frac{W_{\et-(t-\et)}-W_{\et-\delta}}{\sqrt{\delta-(t-\et)}}, \qquad \bar P=N_t-N_{\et-\delta}.
\end{equation*}
Observe that $\bar W_1, \bar W_2, \bar W_3$ are standard normally distributed, $\bar P$ is Poisson distributed with parameter $\int_{\et-\delta}^t \lambda_s ds$, $\bar W_1, \bar W_2, \bar W_3, \bar P$ are independent, $(\bar W_1, \bar W_2, \bar W_3, \bar P)$ is independent of $\cF_s$ since $s\le \et-\delta$, and $(\bar W_1, \bar W_2)$ is independent of $\cF_{\et-(t-\et)}$.

Let $c_2=\max\{c_\mu,c_\sigma,c_\rho\}$ and let $\delta$ be sufficiently small such that
\begin{equation}\label{star1}
12c_2(1+|\zeta_k|) \cdot \frac{1+\sqrt{2\log(T/\delta)}}{\sqrt{T/\delta}} \le \frac{1}{2}.
\end{equation}
Then note that the following inclusion similar to \cite[(20)]{muellergronbach2019} holds:
\begin{equation}\label{star2}
\begin{aligned}
&\mathcal{Z}_k^t  \cap \{\bar P=0\}\cap \bigg\{\max_{i\in\{1,2,3\}} |\bar W_i|   \le \sqrt{2\log(T/\delta)}\bigg\}\\
&\quad  \subseteq
\{\bar P=0\}\cap\bigg\{|\appx_{\et-(t-\et)}-\zeta_k| \le \frac{12c_2(1+|\zeta_k|)\cdot(1+|\bar W_1|+|\bar W_2|)}{\sqrt{T/\delta}}\bigg\}\\
&\quad  \subseteq
\bigg\{|\appx_{\et-(t-\et)}-\zeta_k| \le \frac{12c_2(1+|\zeta_k|)\cdot(1+|\bar W_1|+|\bar W_2|)}{\sqrt{T/\delta}}\bigg\}.
\end{aligned}
\end{equation}
In fact, with the additional condition $\bar P=0$, we are back to the jump-free case studied in \cite{muellergronbach2019}.
The proof of \eqref{star2} hence works exactly as the one for \cite[(20)]{muellergronbach2019}, which is a part of \cite[Proof of Lemma 5]{muellergronbach2019}.

Using \eqref{star2} we obtain
\begin{equation}\label{star4}
\begin{aligned}
 \P(\mathcal{Z}_k^s\cap \mathcal{Z}_k^t) 
  & =
   \P\bigg(\mathcal{Z}_k^s\cap \mathcal{Z}_k^t \cap \{\bar P=0\}\cap \bigg\{\max_{i\in\{1,2,3\}} |\bar W_i|   \le \sqrt{2\log(T/\delta)}\bigg\}\bigg) 
     \\&\quad+
      \P\bigg( \mathcal{Z}_k^s\cap \mathcal{Z}_k^t\cap\bigg( \{\bar P>0\}\cup \left\{\max_{i\in\{1,2,3\}} |\bar W_i|   > \sqrt{2\log(T/\delta)}\right\} \bigg)\bigg)
    \\& \le
  \P\bigg(\mathcal{Z}_k^s\cap\bigg\{|\appx_{\et-(t-\et)}-\zeta_k| \le \frac{12c_2(1+|\zeta_k|)\cdot(1+|\bar W_1|+|\bar W_2|)}{\sqrt{T/\delta}}\bigg\}\bigg)
  \\&\quad+
      \P\bigg( \mathcal{Z}_k^s\cap\bigg( \{\bar P>0\}\cup \left\{\max_{i\in\{1,2,3\}} |\bar W_i|   > \sqrt{2\log(T/\delta)}\right\} \bigg)\bigg).
\end{aligned}
\end{equation}
For the first term on the right hand side of \eqref{star4}, we use the fact that the sum of standard normally distributed random variables is normally distributed with mean 0 and variance 2.
\begin{equation}\label{star5}
\begin{aligned}
  &\P\bigg(\mathcal{Z}_k^s\cap\bigg\{|\appx_{\et-(t-\et)}-\zeta_k| \le \frac{12c_2(1+|\zeta_k|)\cdot(1+|\bar W_1|+|\bar W_2|)}{\sqrt{T/\delta}}\bigg\}\bigg)
  \\&\quad\le
 \frac{2}{\pi} \int_{[0,\infty)\times [0,\infty)}   \P\bigg(\mathcal{Z}_k^s\cap\bigg\{|\appx_{\et-(t-\et)}-\zeta_k| \le \frac{12c_2(1+|\zeta_k|)\cdot(1+z_1+z_2)}{\sqrt{T/\delta}}\bigg\}\bigg) 
     \\&\qquad \cdot e^{-\frac{(z_1)^2+(z_2)^2}{2}} d(z_1,z_2)
   \\&\quad\le
 \frac{2}{\pi} \int_{\R^2}   \P\bigg(\mathcal{Z}_k^s\cap\bigg\{|\appx_{\et-(t-\et)}-\zeta_k| \le \frac{12\sqrt{2}c_2(1+|\zeta_k|)\cdot(1+|z_1+z_2|/\sqrt{2})}{\sqrt{T/\delta}}\bigg\}\bigg) 
    \\&\qquad\cdot e^{-\frac{(z_1)^2+(z_2)^2}{2}} d(z_1,z_2)
    \\&\quad=
 \frac{4}{\sqrt{2\pi}} \int_{\R}   \P\bigg(\mathcal{Z}_k^s\cap\bigg\{|\appx_{\et-(t-\et)}-\zeta_k| \le \frac{12\sqrt{2}c_2(1+|\zeta_k|)\cdot(1+|z|)}{\sqrt{T/\delta}}\bigg\}\bigg) \cdot e^{-\frac{z^2}{2}} dz.
\end{aligned}
\end{equation}
For the first term on the right hand side of \eqref{star4} we use a standard Gaussian tail estimate.
\begin{equation}\label{star6}
\begin{aligned}
&  \P\!\left( \mathcal{Z}_k^s\cap\left( \{\bar P>0\}\cup \left\{\max_{i\in\{1,2,3\}} |\bar W_i|   > \sqrt{2\log(T/\delta)}\right\} \right)\right)
  \\&\quad=
  \P\!\left( \mathcal{Z}_k^s\cap\left( \{\bar P>0\}\cup \{|\bar W_1|   > \sqrt{2\log(T/\delta)}\} \cup \{|\bar W_2|   > \sqrt{2\log(T/\delta)}\} \cup \{|\bar W_3|   > \sqrt{2\log(T/\delta)}\} \right)\right)
    \\&\quad\le
  \P( \mathcal{Z}_k^s) \cdot \left(  3 \,\P( \{|\bar W_1|   > \sqrt{2\log(T/\delta)}\} ) + \P(\bar P>0) \right)
      \\&\quad\le
  \P( \mathcal{Z}_k^s) \cdot \left(  \frac{3 \delta}{T\sqrt{\pi \log(T/\delta)}}+ \P(\bar P>0) \right)
    =
  \P( \mathcal{Z}_k^s) \cdot \left(  \frac{3 \delta}{T\sqrt{\pi \log(T/\delta)}}+ 1-\P(\bar P=0) \right)
            \\&\quad=
  \P( \mathcal{Z}_k^s) \cdot \left(  \frac{3 \delta}{T\sqrt{\pi \log(T/\delta)}}+ 1-e^{-\int_{\et-\delta}^t \lambda_s ds} \right)
         \le
  \P( \mathcal{Z}_k^s) \cdot \left(  \frac{3 \delta}{T\sqrt{\pi \log(T/\delta)}}+ 2\delta \|\lambda\|_\infty \right).
  \end{aligned}
\end{equation}
Combining \eqref{star4} with \eqref{star5} and \eqref{star6} finishes the proof.
\end{proof}

\begin{lemma}\label{help1-cross}
Let Assumptions \ref{ass:ex-un} hold. Let $s\in[0,T)$.
There exists a constant ${C_2}\in(0,\infty)$ such that for all $k\in\{1,\dots,m\}$, $\delta\in(0,1)$ sufficiently small,
\begin{equation*}
 \int_s^T  \P(\mathcal{Z}_k^s\cap \mathcal{Z}_k^t) dt
\le
{C_2}\delta^{1/2}\cdot \left(\P(\mathcal{Z}_k^s)
+  \E\!\left[\1_{\mathcal{Z}_k^s} \cdot (\appx_{\es+\delta}-\zeta_k)^2 \right]\right).
\end{equation*}
\end{lemma}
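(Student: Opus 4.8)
The plan is to split the time integral at $\es+2\delta$, dispose of the short piece trivially, apply Lemma~\ref{help0-cross} on the long piece, and then reduce the resulting expression to the occupation-time estimate of Lemma~\ref{occup_time} by restarting the Euler--Maruyama scheme at the grid point $\es+\delta$. For the short piece, $t\in[s,\es+2\delta)\cap[s,T]$, which has length at most $2\delta$ because $\es\le s$, I would simply use $\P(\mathcal{Z}_k^s\cap\mathcal{Z}_k^t)\le\P(\mathcal{Z}_k^s)$, so its contribution is at most $2\delta\,\P(\mathcal{Z}_k^s)\le 2\delta^{1/2}\P(\mathcal{Z}_k^s)$. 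For $t\in[\es+2\delta,T]$ one has $\et\ge\es+2\delta>s+\delta$ (since $s<\es+\delta$), hence $\et-s\ge\delta$, so Lemma~\ref{help0-cross} is available. The term $C_1\P(\mathcal{Z}_k^s)\delta$ there integrates over $[\es+2\delta,T]$ to at most $C_1T\delta^{1/2}\P(\mathcal{Z}_k^s)$, and what is left is the integral over $t\in[\es+2\delta,T]$ of $C_1\int_\R \P(\mathcal{Z}_k^s\cap\{|\appx_{\et-(t-\et)}-\zeta_k|\le C_1\delta^{1/2}(1+|z|)\})\,e^{-z^2/2}\,dz$.

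By Tonelli I would integrate in $t$ first. On a grid cell $[t_j,t_{j+1})$ with $t_j\ge\es+2\delta$ one has $\et=t_j$, so $\et-(t-\et)=2t_j-t$, and the substitution $u=2t_j-t$ turns $\int_{t_j}^{t_{j+1}}\P(\mathcal{Z}_k^s\cap\{|\appx_{2t_j-t}-\zeta_k|\le\varepsilon\})\,dt$ into $\int_{t_{j-1}}^{t_j}\P(\mathcal{Z}_k^s\cap\{|\appx_u-\zeta_k|\le\varepsilon\})\,du$. Summing over the relevant cells bounds the inner $t$-integral by $\int_{\es+\delta}^{T}\P(\mathcal{Z}_k^s\cap\{|\appx_u-\zeta_k|\le\varepsilon\})\,du$ with $\varepsilon=C_1\delta^{1/2}(1+|z|)$.

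Now the key step. Since $\mathcal{Z}_k^s\in\cF_s\subseteq\cF_{\es+\delta}$ and $\es+\delta$ is a grid point (and $\le T$ because $s<T$), the Markov property of the scheme \eqref{euler} gives that, conditionally on $\cF_{\es+\delta}$, the process $(\appx_{\es+\delta+v})_{v\ge0}$ is again an Euler--Maruyama process started at $\appx_{\es+\delta}$ with the (still bounded) intensity $\lambda(\es+\delta+\cdot)$. Hence, by Lemma~\ref{occup_time} applied on the horizon $T-\es-\delta\le T$ — its proof uses $\lambda$ only through $\|\lambda\|_\infty$, so it is insensitive to the time shift —
\[
\int_{\es+\delta}^{T}\P(\mathcal{Z}_k^s\cap\{|\appx_u-\zeta_k|\le\varepsilon\})\,du
\le \E\!\left[\1_{\mathcal{Z}_k^s}\,C^{(\text{O})}(1+\appx_{\es+\delta}^2)(\varepsilon+\delta^{1/2})\right].
\]
Finally I would use $1+x^2\le(1+2\zeta_k^2)+2(x-\zeta_k)^2$, $\varepsilon+\delta^{1/2}\le(1+2C_1)\delta^{1/2}(1+|z|)$, and $\int_\R(1+|z|)e^{-z^2/2}\,dz<\infty$; plugging these in and collecting the three contributions yields the asserted bound with a constant depending on $k$, and taking the maximum over the finitely many $k\in\{1,\dots,m\}$ gives $C_2$.

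The main obstacle is the last step: the bookkeeping of the cellwise change of variables $u=2\et-t$ (which cells contribute, and that their images tile $[\es+\delta,T-\delta]$), and above all the reduction of the \emph{conditional} occupation-type quantity $\P(\mathcal{Z}_k^s\cap\{|\appx_u-\zeta_k|\le\varepsilon\})$ to Lemma~\ref{occup_time} — one must restart the scheme at the grid point $\es+\delta$, invoke the Markov property, and then convert the factor $1+\appx_{\es+\delta}^2$ produced by Lemma~\ref{occup_time} into the quantity $\E[\1_{\mathcal{Z}_k^s}(\appx_{\es+\delta}-\zeta_k)^2]$ plus a multiple of $\P(\mathcal{Z}_k^s)$ that appears in the statement.
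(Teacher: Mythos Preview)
Your plan is correct and follows essentially the same route as the paper's proof: split the integral at $\es+2\delta$, bound the short piece by $2\delta\,\P(\mathcal{Z}_k^s)$, apply Lemma~\ref{help0-cross} on the long piece, perform the cellwise substitution $u=2t_\ell-t$ to obtain $\int_{\es+\delta}^{T-\delta}\P(\mathcal{Z}_k^s\cap\{|\appx_u-\zeta_k|\le\varepsilon\})\,du$, then restart the scheme at the grid point $\es+\delta$ via the Markov property and invoke Lemma~\ref{occup_time}, finally converting $1+x^2$ into $\P(\mathcal{Z}_k^s)+\E[\1_{\mathcal{Z}_k^s}(\appx_{\es+\delta}-\zeta_k)^2]$ up to a constant depending on $\zeta_k$. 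Your remark that Lemma~\ref{occup_time} depends on $\lambda$ only through $\|\lambda\|_\infty$ (and hence survives the time shift) is exactly the implicit point the paper uses when writing $\E[\,\cdot\mid\appx_{\es+\delta}=x]=\int_0^{T-2\delta-\es}\E[\,\cdot\mid\appx_0=x]\,dt$; the bookkeeping you flag as the ``main obstacle'' is carried out in the paper in precisely the way you outline.
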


\begin{proof}
We follow the first part of \cite[Proof of Lemma 6]{muellergronbach2019}.
For $s\ge T-\delta$,
\begin{equation*}
 \int_s^T  \P(\mathcal{Z}_k^s\cap \mathcal{Z}_k^t) dt
\le
 \int_{T-\delta}^T \P(\mathcal{Z}_k^s) dt=   \P(\mathcal{Z}_k^s)\cdot  \delta.
\end{equation*}
Therefore we may assume $s< T-\delta$ and hence $\es\le T-2\delta$.
With this
\begin{equation}\label{cat1}
\begin{aligned}
& \int_s^T  \P(\mathcal{Z}_k^s\cap \mathcal{Z}_k^t) dt
=
 \int_s^{\es+2\delta}  \P(\mathcal{Z}_k^s\cap \mathcal{Z}_k^t) dt
+
 \int_{\es+2\delta} ^T \P(\mathcal{Z}_k^s\cap \mathcal{Z}_k^t) dt
\\&\quad\le
 \int_\es^{\es+2\delta}  \P(\mathcal{Z}_k^s) dt
+
 \int_{\es+2\delta} ^T \P(\mathcal{Z}_k^s\cap \mathcal{Z}_k^t) dt
\le
2\delta\, \P(\mathcal{Z}_k^s)
+
 \int_{\es+2\delta} ^T \P(\mathcal{Z}_k^s\cap \mathcal{Z}_k^t) dt.
\end{aligned}
\end{equation}
Let $\tilde \ell \in \N$ be such that $t_{\tilde \ell} = \es + 2 \delta$. For $t\in[\es+2\delta,T]$ it holds that $\et\ge \es+2\delta$ and hence $\et-\delta\ge\es+\delta\ge s$. Hence we may apply Lemma \ref{help0-cross} and obtain that there exists a constant $c_1\in(0,\infty)$ such that
\begin{equation*}
\begin{aligned}
& \int_s^T  \P(\mathcal{Z}_k^s\cap \mathcal{Z}_k^t) dt
\\&\quad\le
c_1 \P(\mathcal{Z}_k^s)\delta
+ c_1
 \int_{\es+2\delta} ^T 
\l\int_\R \P\!\left(\mathcal{Z}_k^s\cap  \left\{|\appx_{\et-(t-\et)}-\zeta_k| \le c_1\delta^{1/2} (1+|z|)\right\}\right) \cdot e^{-\frac{z^2}{2}} dz
dt
\\&\quad=
c_1 \P(\mathcal{Z}_k^s)\delta
+ c_1
\sum_{\ell=\tilde \ell}^{N-1} \int_{t_\ell} ^{t_{\ell+1}}
\l\int_\R \P\!\left(\mathcal{Z}_k^s\cap  \left\{|\appx_{t_\ell-(t-t_\ell)}-\zeta_k| \le c_1\delta^{1/2} (1+|z|)\right\}\right) \cdot e^{-\frac{z^2}{2}} dz
dt
.
\end{aligned}
\end{equation*}
Substituting $u=t_\ell-(t-t_\ell)$ gives
\begin{equation*}
\begin{aligned}
& \int_s^T  \P(\mathcal{Z}_k^s\cap \mathcal{Z}_k^t) dt
\\&\quad=
c_1 \P(\mathcal{Z}_k^s)\delta
- c_1
\sum_{\ell=\tilde \ell}^{N-1} \int_{t_\ell} ^{t_{\ell-1}}
\l\int_\R \P\!\left(\mathcal{Z}_k^s\cap  \left\{|\appx_{u}-\zeta_k| \le c_1\delta^{1/2} (1+|z|)\right\}\right) \cdot e^{-\frac{z^2}{2}} dz
du
\\&\quad=
c_1 \P(\mathcal{Z}_k^s)\delta
+ c_1
\sum_{\ell=\tilde \ell}^{N-1} \int_{t_{\ell-1}} ^{t_{\ell}}
\l\int_\R \P\!\left(\mathcal{Z}_k^s\cap  \left\{|\appx_{u}-\zeta_k| \le c_1\delta^{1/2} (1+|z|)\right\}\right) \cdot e^{-\frac{z^2}{2}} dz
du
\\&\quad=
c_1 \P(\mathcal{Z}_k^s)\delta
+ c_1
 \int_{\es+\delta} ^{T-\delta}
\l\int_\R \P\!\left(\mathcal{Z}_k^s\cap  \left\{|\appx_u-\zeta_k| \le c_1\delta^{1/2} (1+|z|)\right\}\right) \cdot e^{-\frac{z^2}{2}} dz
du
\\&\quad=
c_1 \P(\mathcal{Z}_k^s)\delta
+ c_1
\l\int_\R \left[
 \int_{\es+\delta} ^{T-\delta}
\P\!\left(\mathcal{Z}_k^s\cap  \left\{|\appx_u-\zeta_k| \le c_1\delta^{1/2} (1+|z|)\right\}\right)  du \right]e^{-\frac{z^2}{2}} 
dz
.
\end{aligned}
\end{equation*}
The Markov property yields
\begin{equation}\label{cat2}
\begin{aligned}
&
 \int_{\es+\delta} ^{T-\delta}
\P\!\left(\mathcal{Z}_k^s\cap  \left\{|\appx_t-\zeta_k| \le c_1\delta^{1/2} (1+|z|)\right\}\right)  dt
\\&\quad=
\E\!\left[\1_{\mathcal{Z}_k^s} \cdot \E\!\left[  \int_{\es+\delta} ^{T-\delta}  \1_{ \left\{|\appx_t-\zeta_k| \le c_1\delta^{1/2} (1+|z|)\right\}} dt \,\vline\,\appx_{\es+\delta}\right]\right].
\end{aligned}
\end{equation}
Lemma \ref{occup_time} ensures that there exists $c_2\in(0,\infty)$ such that
\begin{equation*}
\begin{aligned}
&
  \E\!\left[  \int_{\es+\delta} ^{T-\delta}  \1_{ \left\{|\appx_t-\zeta_k| \le c_1\delta^{1/2} (1+|z|)\right\}} dt \,\vline\,\appx_{\es+\delta}=x\right]
\\&\quad= 
   \int_{0} ^{T-2\delta-\es}  \E\!\left[ \1_{ \left\{|\appx_t-\zeta_k| \le c_1\delta^{1/2} (1+|z|)\right\}}  \,\vline\,\appx_{0}=x\right] dt
\\&\quad\le
c_2(1+x^2)(c_1\delta^{1/2} (1+|z|)+\delta^{1/2})
.
\end{aligned}
\end{equation*}
Combining this with \eqref{cat2} gives
\begin{equation*}
\begin{aligned}
&
 \int_{\es+\delta} ^{T-\delta}
\P\!\left(\mathcal{Z}_k^s\cap  \left\{|\appx_t-\zeta_k| \le c_1\delta^{1/2} (1+|z|)\right\}\right)  dt
\\&\quad\le
\E\!\left[\1_{\mathcal{Z}_k^s} \cdot c_2(1+(\appx_{\es+\delta})^2)(c_1\delta^{1/2} (1+|z|)+\delta^{1/2}) \right]
\\&\quad\le
2c_2(c_1(1+|z|)+1)(1+|\zeta_k|^2)\delta^{1/2}\cdot
\left(\P(\mathcal{Z}_k^s)+\E\!\left[\1_{\mathcal{Z}_k^s} \cdot (\appx_{\es+\delta}-\zeta_k)^2
\right]\right).
\end{aligned}
\end{equation*}

\end{proof}

\begin{lemma}\label{help3-cross}
Let Assumptions \ref{ass:ex-un} hold.
There exists a constant ${C_3}\in(0,\infty)$ such that for all $k\in\{1,\dots,m\}$,
\begin{equation*}
 \int_0^T \E\!\left[\1_{\mathcal{Z}_k^s} \cdot (\appx_{\es+\delta}-\zeta_k)^2 \right]ds\le {C_3} \delta.
\end{equation*}
\end{lemma}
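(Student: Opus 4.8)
The plan is to exploit the geometric content of the event $\mathcal{Z}_k^s$. On $\mathcal{Z}_k^s$ we have $(\appx_\es-\zeta_k)(\appx_s-\zeta_k)\le 0$, which means that $\zeta_k$ lies (weakly) between $\appx_\es$ and $\appx_s$; consequently $|\appx_\es-\zeta_k|\le|\appx_s-\appx_\es|$. This is the key observation: it lets me replace the quantity $\appx_\es-\zeta_k$, which is hard to control directly on $\mathcal{Z}_k^s$, by an increment of the Euler--Maruyama process over a time span of length at most $\delta$, and such increments are controlled by the moment estimate of Lemma \ref{msq-appx}.

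Concretely, I would first split
\begin{equation*}
\1_{\mathcal{Z}_k^s}(\appx_{\es+\delta}-\zeta_k)^2
\le 2\,\1_{\mathcal{Z}_k^s}(\appx_{\es+\delta}-\appx_\es)^2 + 2\,\1_{\mathcal{Z}_k^s}(\appx_\es-\zeta_k)^2
\le 2(\appx_{\es+\delta}-\appx_\es)^2 + 2(\appx_s-\appx_\es)^2,
\end{equation*}
where in the last step I drop the indicator in the first summand (everything is nonnegative) and use the geometric inequality above in the second summand. Taking expectations and noting that $\es$ and $\es+\delta$ are consecutive grid points with $|\es+\delta-\es|=\delta$, while $|s-\es|<\delta$, the second assertion of Lemma \ref{msq-appx} applied with $p=2$ gives $\E[(\appx_{\es+\delta}-\appx_\es)^2]\le C^{(\text{M})}\delta$ and $\E[(\appx_s-\appx_\es)^2]\le C^{(\text{M})}\delta$. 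Hence $\E[\1_{\mathcal{Z}_k^s}(\appx_{\es+\delta}-\zeta_k)^2]\le 4C^{(\text{M})}\delta$, uniformly in $s$ and in $k$, and integrating over $s\in[0,T]$ yields the claim with ${C_3}=4TC^{(\text{M})}$.

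I do not expect a genuine obstacle here; the argument is essentially a one-line reduction to Lemma \ref{msq-appx}. The only points deserving a word of care are the elementary case analysis behind $|\appx_\es-\zeta_k|\le|\appx_s-\appx_\es|$ on $\mathcal{Z}_k^s$ (distinguishing which of $\appx_\es,\appx_s$ lies on which side of $\zeta_k$, together with the degenerate case $\appx_\es=\zeta_k$), and the minor bookkeeping observation that for $s\in[0,T)$ the point $\es+\delta$ is a grid point not exceeding $T$, so that $\appx_{\es+\delta}$ is well defined; the single value $s=T$ is irrelevant for the integral.
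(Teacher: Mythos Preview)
Your proposal is correct and essentially matches the paper's proof. The paper uses the triangle inequality via $\appx_s$ (writing $|\appx_{\es+\delta}-\zeta_k|\le|\appx_{\es+\delta}-\appx_s|+|\appx_s-\zeta_k|$ and then $|\appx_s-\zeta_k|\le|\appx_s-\appx_\es|$ on $\mathcal{Z}_k^s$), whereas you route through $\appx_\es$; both rely on the same geometric observation and on Lemma~\ref{msq-appx}, and both arrive at the same constant $C_3=4TC^{(\text{M})}$.
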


\begin{proof}
First, note that
\begin{equation*}
\begin{aligned}
\1_{\mathcal{Z}_k^s} \cdot |\appx_{\es+\delta}-\zeta_k| 
&\le 
\1_{\mathcal{Z}_k^s} \cdot (|\appx_{\es+\delta}-\appx_s| + |\appx_s-\zeta_k| )
\\&\le 
\1_{\mathcal{Z}_k^s} \cdot (|\appx_{\es+\delta}-\appx_s| + |\appx_s-\appx_\es| )
.
\end{aligned}
\end{equation*}
With this,
\begin{equation*}
\begin{aligned}
& \int_0^T \E\!\left[\1_{\mathcal{Z}_k^s} \cdot (\appx_{\es+\delta}-\zeta_k)^2 \right]ds
\le
 \int_0^T \E\!\left[(|\appx_{\es+\delta}-\appx_s| + |\appx_s-\appx_\es| )^2 \right]ds
 \\&\quad=
  \sum_{k=0}^{N-1}\int_{t_k}^{t_{k+1}} \E\!\left[(|\appx_{\es+\delta}-\appx_s| + |\appx_s-\appx_\es| )^2 \right]ds
   \\&\quad\le
 2\cdot  \sum_{k=0}^{N-1}\int_{t_k}^{t_{k+1}}\left( \E\!\left[|\appx_{\es+\delta}-\appx_s|^2\right] + \E\!\left[|\appx_s-\appx_\es|^2  \right]\right)ds.
 \end{aligned}
\end{equation*}
Hence, Lemma \ref{msq-appx} yields
\begin{equation*}
\begin{aligned}
& \int_0^T \E\!\left[\1_{\mathcal{Z}_k^s} \cdot (\appx_{\es+\delta}-\zeta_k)^2 \right]ds
\le
 4TC^{(\text{M})}\cdot \delta.
 \end{aligned}
\end{equation*}

\end{proof}

\begin{proposition}\label{prob-cross}
Let Assumptions \ref{ass:ex-un} hold.
There exists a constant $C^{(\text{cross})}\in(0,\infty)$ such that for all $k\in\{1,\dots,m\}$, $\delta\in(0,1)$ sufficiently small,
\begin{equation*}
\E\!\left[\left|\int_0^{T}   \1_{\{(x,y)\in\R^2 \colon (x-\zeta_k)(y-\zeta_k)\le0\}}(\appx_\es,\appx_s) ds\right|^2\right] \le C^{(\text{cross})} \cdot \delta.
\end{equation*}
\end{proposition}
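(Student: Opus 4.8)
The plan is to reduce the second moment to pairwise crossing estimates that are already available, the only genuinely new ingredient being an $O(\delta^{1/2})$ bound for $\int_0^T\P(\mathcal{Z}_k^s)\,ds$. Since $\1_{\{(x,y)\in\R^2\colon(x-\zeta_k)(y-\zeta_k)\le0\}}(\appx_\es,\appx_s)=\1_{\mathcal{Z}_k^s}$, expanding the square, applying Tonelli's theorem (the integrands being nonnegative) and using the symmetry of $\P(\mathcal{Z}_k^s\cap\mathcal{Z}_k^t)$ in $(s,t)$ gives
\[
\E\!\left[\left|\int_0^T\1_{\mathcal{Z}_k^s}\,ds\right|^2\right]=2\int_0^T\!\left(\int_s^T\P(\mathcal{Z}_k^s\cap\mathcal{Z}_k^t)\,dt\right)ds .
\]
By Lemma~\ref{help1-cross} the inner integral is at most $C_2\delta^{1/2}\bigl(\P(\mathcal{Z}_k^s)+\E[\1_{\mathcal{Z}_k^s}(\appx_{\es+\delta}-\zeta_k)^2]\bigr)$, and by Lemma~\ref{help3-cross} the $s$-integral of the second summand is at most $C_3\delta$. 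Hence it remains to find a constant $C$ with $\int_0^T\P(\mathcal{Z}_k^s)\,ds\le C\delta^{1/2}$ for all sufficiently small $\delta$: the right-hand side above is then bounded by $2C_2\delta^{1/2}(C\delta^{1/2}+C_3\delta)\le C^{(\text{cross})}\delta$ for a suitable constant.

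For the remaining bound I would carry out the single-step analogue of the Gaussian part of the proof of Lemma~\ref{help0-cross}. The contribution of $s\in[0,\delta)$ to the integral is at most $\delta$, so fix $s$ with $\es\ge\delta$, condition on $\cF_\es$, and split according to whether $N_s-N_\es$ vanishes; the jump part contributes at most $\|\lambda\|_\infty\delta$. On $\{N_s-N_\es=0\}$ one has $\appx_s-\zeta_k=(\appx_\es-\zeta_k)+\mu(\appx_\es)(s-\es)+\sigma(\appx_\es)\sqrt{s-\es}\,\bar W$, with $\bar W=(W_s-W_\es)/\sqrt{s-\es}$ standard normal and independent of $\cF_\es$; hence, using $s-\es\le\delta$ and the linear growth of $\mu,\sigma$ from Lemma~\ref{lin_growth_c}, the sign-change event $\mathcal{Z}_k^s$ forces $|\appx_\es-\zeta_k|\le c\,(1+|\appx_\es|)\,\delta^{1/2}(1+|\bar W|)$ for a suitable constant $c$. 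On this event $\appx_\es$ lies in a fixed bounded neighbourhood of $\zeta_k$ as soon as the right-hand side is at most $1$, so the factor $1+|\appx_\es|$ can be absorbed into a constant depending only on $\zeta_k$; the complementary part, where the right-hand side exceeds $1$ and thus $|\appx_\es|$ or $|\bar W|$ is of order $\delta^{-1/2}$, is $O(\delta)$ by the moment bound of Lemma~\ref{msq-appx} and a Gaussian tail estimate, exactly as the truncation of the Brownian increments is handled in the proof of Lemma~\ref{help0-cross}. Integrating out $\bar W$ gives
\[
\P(\mathcal{Z}_k^s)\le C\delta+C\int_\R\P\!\bigl(|\appx_\es-\zeta_k|\le C\delta^{1/2}(1+|z|)\bigr)\,e^{-z^2/2}\,dz .
\]
It now suffices to prove the "frozen" occupation-time estimate $\int_0^T\P(|\appx_\es-\zeta_k|\le\eta)\,ds\le C(\eta+\delta^{1/2})$ for all $\eta>0$; substituting $\eta=C\delta^{1/2}(1+|z|)$ and integrating against $e^{-z^2/2}\,dz$ (which integrates $1+|z|$) then yields $\int_0^T\P(\mathcal{Z}_k^s)\,ds\le C\delta^{1/2}$.

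This last estimate I would derive from the genuine occupation-time bound of Lemma~\ref{occup_time} (for the scheme started at $\xi$). On $\{|\appx_\et-\zeta_k|\le\eta\}$, with $\eta$ small, the coefficients $\mu$ and $\sigma$ are bounded by absolute constants since $\zeta_k$ is fixed; so, conditionally on $\cF_\et$ and on the absence of a jump in $[\et,\et+h]$ with $h$ of order $\min(\eta^2,\delta)$, a small Brownian fluctuation keeps the continuous-time Euler path within $C\eta$ of $\zeta_k$ on all of $[\et,\et+h]$ with probability bounded away from $0$. This yields $\delta\,\P(|\appx_\et-\zeta_k|\le\eta)\le(C\delta/h)\int_{\et}^{\et+\delta}\P(|\appx_t-\zeta_k|\le C\eta)\,dt$; for $\eta$ at least of order $\delta^{1/2}$ one has $h$ of order $\delta$, and for smaller $\eta$ one first enlarges the band to order $\delta^{1/2}$ using monotonicity in $\eta$. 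Summing over the grid cells and applying Lemma~\ref{occup_time} gives the claimed bound, completing the proof.

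The heart of the matter is this last point. The quantity $\int_0^T\P(\mathcal{Z}_k^s)\,ds$ is genuinely of order $\delta^{1/2}$ (as one sees already for Brownian motion), but the trivial bound gives only $O(1)$, and a crude Gaussian-tail truncation produces a logarithmic factor that would spoil the order-$\delta$ conclusion; keeping the standard normal variable inside the band width, so that it gets integrated against $e^{-z^2/2}$ as in Lemmas~\ref{help0-cross} and~\ref{help1-cross}, and comparing the frozen occupation time with the true one via a short-time non-exit estimate, are precisely what make the order $\delta^{1/2}$ appear without loss.
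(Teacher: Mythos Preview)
Your overall architecture matches the paper's: expand the square, symmetrise, apply Lemma~\ref{help1-cross} to the inner integral, and handle the second summand by Lemma~\ref{help3-cross}. The difference lies entirely in how you obtain the remaining bound $\int_0^T\P(\mathcal{Z}_k^s)\,ds\le C\delta^{1/2}$. You redo, in a one-step version, the Gaussian analysis behind Lemma~\ref{help0-cross} to reduce to a ``frozen'' occupation-time estimate $\int_0^T\P(|\appx_{\es}-\zeta_k|\le\eta)\,ds\le C(\eta+\delta^{1/2})$, and then compare the frozen with the continuous-time occupation time via a short-time non-exit argument feeding into Lemma~\ref{occup_time}. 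This works, but the paper's route is far shorter: it observes that the only property of $\mathcal{Z}_k^s$ used in Lemma~\ref{help1-cross} is $\cF_s$-measurability, so one may take $\mathcal{Z}_k^s=\Omega$ and $s=0$ there to obtain directly
\[
\int_0^T\P(\mathcal{Z}_k^t)\,dt\le C_2\,\delta^{1/2}\bigl(1+\E[(\appx_{\delta}-\zeta_k)^2]\bigr)\le C_2\,\delta^{1/2}\bigl(1+2\zeta_k^2+2C^{(\mathrm{M})}\bigr),
\]
using Lemma~\ref{msq-appx} for the moment. In other words, the ``genuinely new ingredient'' you identified is in fact a free corollary of Lemma~\ref{help1-cross} itself, and no separate single-step Gaussian analysis or frozen-to-continuous comparison is needed. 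Your approach is self-contained and makes the mechanism (sign change forces the grid value into an $O(\delta^{1/2})$ band) explicit, but at the cost of re-proving estimates already encapsulated in Lemmas~\ref{help0-cross}--\ref{help1-cross}.
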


\begin{proof}
By Lemma \ref{help1-cross} we get that
\begin{equation}\label{heinzi}
\begin{aligned}
&\E\!\left[\left|\int_0^{T}   \1_{\{(x,y)\in\R^2 \colon (x-\zeta_k)(y-\zeta_k)\le0\}}(\appx_\es,\appx_s) ds\right|^2\right] 
= 2\cdot\int_0^T \int_s^T  \P(\mathcal{Z}_k^s\cap \mathcal{Z}_k^t) dtds
\\&\quad\le
2\,{C_2}\delta^{1/2}\cdot \left(  \int_0^T\P(\mathcal{Z}_k^s)ds
+  \int_0^T \E\!\left[\1_{\mathcal{Z}_k^s} \cdot (\appx_{\es+\delta}-\zeta_k)^2 \right]ds\right).
\end{aligned}
\end{equation}

Next note that in Lemma \ref{help1-cross} the only requirement on $\mathcal{Z}_k^s$ was that it is $\cF_s$-measurable.
Replacing $\mathcal{Z}_k^s$ by $\Omega$ and setting $s=0$ in Lemma \ref{help1-cross} and applying Lemma \ref{msq-appx} gives
\begin{equation*}
 \int_0^T \P(\mathcal{Z}_k^t) dt\le {C_2}\delta^{1/2}\cdot \left(1
+  \E\!\left[|\appx_{\es+\delta}-\zeta_k|^2 \right]\right)
\le {C_2}\delta^{1/2} \left(1+
2(\zeta_k)^2 +2C^{(\text{M})} \right).
 \end{equation*}

Combining \eqref{heinzi} with this and Lemma \ref{help3-cross} yields
\begin{equation*}
\begin{aligned}
&\E\!\left[\left|\int_0^{T}   \1_{\{(x,y)\in\R^2 \colon (x-\zeta_k)(y-\zeta_k)\le0\}}(\appx_\es,\appx_s) ds\right|^2\right] \\&\quad\le
2\,{C_2}\delta^{1/2}\cdot \left( {C_2} \left(1+
2(\zeta_k)^2 +2C^{(\text{M})} \right) \delta^{1/2}
+ {C_3} \delta\right).
\end{aligned}
\end{equation*}
This closes the proof.
\end{proof}

\subsection{Main result}

\begin{theorem}
Let Assumptions \ref{ass:ex-un} hold.
Then there exists a constant $C^{(\text{EM})}\in(0,\infty)$ such that for all $\delta \in(0,1)$ sufficiently small,
\begin{equation*}
 \bigg(\E\bigg[\sup_{t\in[0,T]}|X_t- \appx_t|^2\bigg]\bigg)^{\!1/2}\le C^{(\text{EM})}\delta^{1/2}.
\end{equation*}
\end{theorem}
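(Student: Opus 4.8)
The plan is to remove the discontinuity via the transform $G$, expand $G(\appx)$ with the Meyer-It\^o formula, and then close a Gronwall estimate for the transformed error; the two genuinely new features---the discontinuity crossing probabilities and the Poisson increments of the Euler scheme---get absorbed by Proposition~\ref{prob-cross} (which rests on the occupation-time estimate Lemma~\ref{occup_time}) and by a step-wise case distinction on the occurrence of jumps. First, since $G$ and $G^{-1}$ are globally Lipschitz, $|X_t-\appx_t|\le L_{G^{-1}}|Z_t-G(\appx_t)|$ with $Z=G(X)$ solving \eqref{eq:Z}, so it suffices to bound $\E[\sup_{t\in[0,T]}|E_t|^2]$ for $E_t:=Z_t-G(\appx_t)$. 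Applying the Meyer-It\^o formula to $G(\appx)$---using that the continuous martingale part of $\appx$ is $\int_0^\cdot\sigma(\appx_{\es})\,dW_s$ with quadratic variation $\int_0^\cdot\sigma(\appx_{\es})^2\,ds$, while its jumps are $\rho(\appx_{\es})\Delta N_s$---gives $dG(\appx_t)=\hat\mu_t\,dt+\hat\sigma_t\,dW_t+\hat\rho_{t-}\,dN_t$ with $\hat\mu_t=G'(\appx_t)\mu(\appx_{\et})+\tfrac12 G''(\appx_t)\sigma(\appx_{\et})^2$, $\hat\sigma_t=G'(\appx_t)\sigma(\appx_{\et})$ and $\hat\rho_t=G(\appx_t+\rho(\appx_{\et}))-G(\appx_t)$. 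Since $E_0=0$, writing $N=\tilde N+\kappa$ and using Cauchy-Schwarz, Burkholder-Davis-Gundy, Doob and $\|\lambda\|_\infty<\infty$, the task reduces to controlling $\E[(\int_0^t|\tilde\mu(Z_s)-\hat\mu_s|\,ds)^{2}]$, $\int_0^t\E[|\tilde\sigma(Z_s)-\hat\sigma_s|^2]\,ds$ and $\int_0^t\E[|\tilde\rho(Z_s)-\hat\rho_s|^2]\,ds$ by $C\delta+C\int_0^t\E[\sup_{u\le s}|E_u|^2]\,ds$.

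For each $f\in\{\mu,\sigma,\rho\}$ I would split $\tilde f(Z_s)-\hat f_s=(\tilde f(Z_s)-\tilde f(G(\appx_s)))+(\tilde f(G(\appx_s))-\hat f_s)$. The first summand is $\le L|E_s|\le L\sup_{u\le s}|E_u|$ by Lipschitz continuity of $\tilde\mu,\tilde\sigma,\tilde\rho$, producing the Gronwall term. The second summand depends only on $\appx$; for $f\in\{\sigma,\rho\}$ it equals $G'(\appx_s)(\sigma(\appx_{\es})-\sigma(\appx_s))$, respectively $G(\appx_s+\rho(\appx_{\es}))-G(\appx_s+\rho(\appx_s))$, hence is $\le C|\appx_s-\appx_{\es}|$, and Lemma~\ref{msq-appx} bounds its $L^2$-square by $C\delta$.

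The hard term is $f=\mu$: the second summand then equals $G'(\appx_s)(\mu(\appx_{\es})-\mu(\appx_s))+\tfrac12 G''(\appx_s)(\sigma(\appx_{\es})^2-\sigma(\appx_s)^2)$. Since $\mu$ and $G''$ are only piecewise Lipschitz with discontinuities at the $\zeta_k$, this is bounded by $C|\appx_s-\appx_{\es}|$ plus a crossing term $\le C(1+\sup_{r\le T}|\appx_r|)\sum_{k=1}^m\1_{\mathcal{Z}_k^s}$ plus a term $\le C(1+\sup_{r\le T}|\appx_r|)\,|\appx_s-\appx_{\es}|$ coming from $\tfrac12 G''(\appx_s)(\sigma(\appx_{\es})^2-\sigma(\appx_s)^2)$. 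The obstacle is exactly the weight $1+\sup_{r\le T}|\appx_r|$ multiplying quantities that are only $O(\delta^{1/2})$ in $L^2$: by \eqref{est-N} one has $\|\appx_s-\appx_{\es}\|_{L^p}=O(\delta^{1/p})$ for \emph{every} $p$, so a bare Cauchy-Schwarz estimate would collapse the rate to $\delta^{1/2}$ in the $L^2$-square, i.e.\ to order $1/4$. I would resolve this by splitting each step with $\1_{\{N_s=N_{\es}\}}+\1_{\{N_s>N_{\es}\}}$. On $\{N_s>N_{\es}\}$ the increment may be large, but $\int_0^T\1_{\{N_s>N_{\es}\}}\,ds\le\delta\,N_T$ and $N_T$ has moments of all orders, so after Cauchy-Schwarz against $\sup_{r\le T}|\appx_r|$ (and Lemma~\ref{msq-appx}) these contributions are $O(\delta^2)$. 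On $\{N_s=N_{\es}\}$ the Euler increment is purely drift-plus-Gaussian, hence $O(\delta^{1/2})$ in every $L^p$ conditionally on $\cF_{\es}$; moreover on $\mathcal{Z}_k^s\cap\{N_s=N_{\es}\}$ the point $\appx_{\es}$ lies within $|\appx_s-\appx_{\es}|$ of $\zeta_k$, so $\mu(\appx_{\es})$ and $\sigma(\appx_{\es})^2$ are bounded there---up to the super-polynomially small event $\{|\appx_s-\appx_{\es}|>\eta,\ N_s=N_{\es}\}$, killed by a Gaussian tail bound---which removes the weight $1+\sup_{r\le T}|\appx_r|$. What is left of the crossing term is then dominated by $C\sum_{k=1}^m\int_0^T\1_{\mathcal{Z}_k^s}\,ds$, whose $L^2$-square is $O(\delta)$ by Proposition~\ref{prob-cross}, and the $|\appx_s-\appx_{\es}|$-weighted terms are $O(\delta)$ by Lemma~\ref{msq-appx}.

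Collecting everything gives $\E[\sup_{u\le t}|E_u|^2]\le C\delta+C\int_0^t\E[\sup_{u\le s}|E_u|^2]\,ds$ for all $t\in[0,T]$, with $\E[\sup_{u\le T}|E_u|^2]<\infty$ from the classical moment bound for the Lipschitz SDE \eqref{eq:Z} together with Lemma~\ref{msq-appx}. Gronwall's lemma then yields $\E[\sup_{t\in[0,T]}|E_t|^2]\le C\delta$, and the Lipschitz reduction at the start upgrades this to $(\E[\sup_{t\in[0,T]}|X_t-\appx_t|^2])^{1/2}\le C^{(\mathrm{EM})}\delta^{1/2}$. I expect the third step to be the main obstacle: decoupling the discontinuity of $G''$ (which the transform forces into the drift of $Z$), the quadratic growth of $\sigma^2$, and the heavy tails of the Poisson increments---which is precisely where the new occupation-time and crossing-probability estimates are needed.
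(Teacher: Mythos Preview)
Your overall strategy is correct and close to the paper's: reduce via $G$, expand $G(\appx)$ by Meyer--It\^o, isolate the discontinuity-crossing contribution and control it through Proposition~\ref{prob-cross}, then close a Gronwall inequality. The paper differs from your route in two technical respects.

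First, the paper inserts the intermediate Euler process $\appz$ for the transformed (Lipschitz) SDE \eqref{eq:Z} and splits $|Z-G(\appx)|\le|Z-\appz|+|\appz-G(\appx)|$. The first piece is the classical rate-$1/2$ Euler error for a Lipschitz jump-diffusion; the second compares two processes whose coefficients are both frozen at $\es$. As a consequence, in the paper's decomposition the discontinuous object that varies is $G''$ (as $(G''(\appx_{\es})-G''(\appx_s))\,\sigma^2(\appx_{\es})$), whereas in your direct comparison it is $\mu$ (as $G'(\appx_s)(\mu(\appx_s)-\mu(\appx_{\es}))$). Both functions are piecewise Lipschitz with the same discontinuity points, and in both cases one must refrain from applying Cauchy--Schwarz to the crossing integral so that Proposition~\ref{prob-cross} yields the full $O(\delta)$; the paper makes this point explicit.

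Second, your Poisson split $\{N_s=N_{\es}\}\cup\{N_s>N_{\es}\}$ to tame the growth weight is valid but heavier than necessary. The paper instead (i) conditions on $\cF_{\es}$ so that $(1+|\appx_{\es}|)^q$ factors out and only the second moment $\E[(N_s-N_{\es})^2]\le C\delta$ enters, giving $\E[(1+|\appx_{\es}|)^q|\appx_s-\appx_{\es}|^2]\le C\delta$; and (ii) on $\mathcal{Z}_k$ uses $|\appx_{\es}|\le|\zeta_k|+|\appx_{\es}-\appx_s|$ to remove the weight entirely, leaving an $\E[|\appx_s-\appx_{\es}|^4]\le C\delta$ term. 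In your own decomposition the growth concern is in fact milder than you state: $G'$ is bounded and the jump sizes $|\mu(\zeta_k+)-\mu(\zeta_k-)|$ are fixed constants, so the crossing term from $\mu$ carries no $(1+\sup_r|\appx_r|)$ factor; and $G''$ has compact support, which localizes $\sigma^2(\appx_{\es})$ without any case distinction on jumps. Either route closes the argument.
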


\begin{proof}
Let $G$ be as in \eqref{eq:G-1d} and $Z$ be as in \eqref{eq:Z}.
Since $G^{-1}$ is Lipschitz,
\begin{align}\label{eq:est-lip}
\bigg( \E\bigg[\sup_{t\in[0,T]}|X_t- \appx_t|^2\bigg]\bigg)^{\!1/2}
 &=
  \bigg(\E\bigg[\sup_{t\in[0,T]}|G^{-1}(Z_t)- G^{-1}(G(\appx_t))|^2\bigg]\bigg)^{\!1/2}
\\&\le
L_{G^{-1}}  \bigg(\E\bigg[\sup_{t\in[0,T]}|Z_t- G(\appx_t)|^2\bigg]\bigg)^{\!1/2}
\end{align}
and by the triangle inequality,
\begin{align}\label{eq:est-triangle}
 \bigg(\E\bigg[\sup_{t\in[0,T]}|Z_t- G(\appx_t)|^2\bigg]\bigg)^{\!1/2}
&\le
 \bigg(\E\bigg[\sup_{t\in[0,T]}|Z_t- \appz_t|^2\bigg]\bigg)^{\!1/2}
 \\&\quad+
 \bigg(\E\bigg[\sup_{t\in[0,T]}|\appz_t- G(\appx_t)|^2\bigg]\bigg)^{\!1/2}.
\end{align}
There exists a constant $c_1\in(0,\infty)$ such that
\begin{align}\label{eq:est-euler}
 \E\bigg[\sup_{t\in[0,T]}|Z_t- \appz_t|^2\bigg]\le c_1\delta.
\end{align}
Hence our task is to estimate the second term in \eqref{eq:est-triangle}.
For all $\tau\in[0,T]$ let
\begin{equation*}
u(\tau):=\E\bigg[\sup_{t\in[0,\tau]}|\appz_t- G(\appx_t)|^2\bigg]
\end{equation*}
and let for all $x_1,x_2\in\R$, $\nu(x_1,x_2)=G'(x_1)\mu(x_2)+\frac{1}{2} G''(x_1) \sigma^2(x_2)$ and $\varsigma(x_1,x_2)=G(x_1+\rho(x_2))-G(x_1)$.
By the Meyer-It\^o formula, which follows from \cite[p.~221, Theorem 71]{protter2004}, we obtain
\begin{equation*}
G(\appx_{t})=G(\appx_{0})+\int_0^{t}\nu(\appx_s,\appx_{\es}) ds+\int_0^{t}G'(\appx_s)\sigma(\appx_{\es})dW_s+\int_0^{t}\varsigma (\appx_{s},\appx_{\es}) dN_s.
\end{equation*}
This yields that
\begin{align*}
u(\tau)
&=
\E\bigg[\sup_{t\in[0,\tau]}\Big|
\int_0^{t}\tilde \mu(\appz_{\es}) ds+\int_0^{t}\tilde \sigma(\appz_{\es}) dW_s+\int_0^t \tilde \rho(\appz_{\es}) dN_s
\\ & \qquad -
 \int_0^{t} \nu(\appx_s,\appx_{\es})ds - \int_0^{t}G'(\appx_s)\sigma(\appx_{\es})dW_s -  \int_0^{t} \varsigma(\appx_{s},\appx_\es) dN_s
\Big|^2\bigg]
\end{align*}
Using
\begin{equation*}
	\nu  (\appx_{\es},\appx_{\es})=\tilde\mu(G(\appx_{\es}))   \quad \text{ and } \quad \varsigma (\appx_{\es},\appx_{\es})=\tilde\rho(G(\appx_{\es})) 
\end{equation*}
we get
\begin{align}\label{6E-i}
u(\tau) \le 6 \cdot \sum_{i=1}^6   E_i(\tau)
\end{align}
with
\begin{align*}
E_1 (\tau)&=\E\bigg[\sup_{t\in[0,\tau]}  \Big|\int_0^{t}[\tilde \mu(\appz_{\es})-\tilde\mu(G(\appx_{\es}))]  ds \Big|^2  \bigg] ,\\
E_2 (\tau)&=  \E\bigg[\sup_{t\in[0,\tau]}  \Big|\int_0^{t}[\tilde \sigma(\appz_{\es})-\tilde\sigma(G(\appx_{\es}))] dW_s \Big|^2  \bigg],\\
E_3 (\tau)&= \E\bigg[\sup_{t\in[0,\tau]}  \Big|\int_0^{t}[\tilde \rho(\appz_{\es})-\tilde\rho(G(\appx_{\es})) ] dN_s \Big|^2 \bigg],\\
E_4 (\tau)&= \E\bigg[\sup_{t\in[0,\tau]}  \Big|\int_0^{t}[\nu(\appx_{\es},\appx_{\es})-\nu(\appx_s,\appx_{\es}) ] ds \Big|^2 \bigg],\\
E_5(\tau) &= \E\bigg[\sup_{t\in[0,\tau]}  \Big|\int_0^{t}[G'(\appx_{\es})\sigma(\appx_{\es})-G'(\appx_s)\sigma(\appx_{\es})] dW_s \Big|^2 \bigg], \\
E_6(\tau)&= \E\bigg[\sup_{t\in[0,\tau]}   \Big|\int_0^{t}[\varsigma(\appx_{\es},\appx_{\es})-\varsigma(\appx_{s},\appx_{\es}) ] dN_s \Big|^2 \bigg].
\end{align*}
The Cauchy-Schwarz inequality yields
\begin{align*}
E_1(\tau) &\le T \cdot\E\!\left[\int_0^{\tau}|\tilde \mu(\appz_{\es})-\tilde\mu(G(\appx_{\es}))|^2 ds   \right] ,
\end{align*}
and the Burkholder-Davis-Gundy inequality, see, e.g., \cite[Lemma 3.7]{hutzenthaler2012}, yields
\begin{align*}
E_2(\tau) &\le 2 \cdot\E\!\left[\int_0^{\tau}|\tilde \sigma(\appz_{\es})-\tilde\sigma(G(\appx_{\es}))|^2 ds   \right] ,\\
E_5(\tau) &\le 2 \cdot \E\!\left[\int_0^{\tau}|G'(\appx_{\es})\sigma(\appx_{\es})-G'(\appx_s)\sigma(\appx_{\es}) |^2 ds \right].
\end{align*}
Finally, since $N_s=\tilde N_s+\kappa_s$ we get by Doob's maximum inequality, It\^o's isometry, and the Cauchy-Schwarz inequality that
\begin{align*}
E_3(\tau) &\le 2\cdot\E\bigg[\sup_{t\in[0,\tau]}  \Big|\int_0^{t}[\tilde \rho(\appz_{\es})-\tilde\rho(G(\appx_{\es})) ] d\tilde N_s \Big|^2 \bigg]+
2\cdot\E\bigg[\sup_{t\in[0,\tau]}  \Big|\int_0^{t}[\tilde \rho(\appz_{\es})-\tilde\rho(G(\appx_{\es})) ] d\kappa(s) \Big|^2 \bigg] \\
&\le 8\cdot\E\bigg[\Big|\int_0^{\tau}[\tilde \rho(\appz_{\es})-\tilde\rho(G(\appx_{\es})) ] d\tilde N_s \Big|^2\bigg]+
2\cdot\E\bigg[\sup_{t\in[0,\tau]}  \Big|\int_0^{t}[\tilde \rho(\appz_{\es})-\tilde\rho(G(\appx_{\es})) ] \lambda(s) ds \Big|^2 \bigg] \\
&\le 8\cdot\E\bigg[\int_0^{\tau}|\tilde \rho(\appz_{\es})-\tilde\rho(G(\appx_{\es})) |^2\lambda(s)ds\bigg]+2T\cdot\E\bigg[\int_0^{\tau}|\tilde \rho(\appz_{\es})-\tilde\rho(G(\appx_{\es})) |^2
\cdot|\lambda(s)|^2ds\bigg]\\
&\le2\|\lambda\|_{\infty}(4+T\|\lambda\|_{\infty})\cdot\E\bigg[\int_0^{\tau}|\tilde \rho(\appz_{\es})-\tilde\rho(G(\appx_{\es})) |^2 ds\bigg]
\end{align*}
and analogously
\begin{equation*}
E_6(\tau) \le2\|\lambda\|_{\infty}(4+T\|\lambda\|_{\infty})\cdot\E\!\left[\int_0^{\tau}|\varsigma(\appx_{\es},\appx_{\es})-\varsigma(\appx_{s},\appx_{\es})|^2 ds\right].
\end{equation*}

Next, using that $\tilde \mu, \tilde \sigma, \tilde \rho$ are Lipschitz, we get that
\begin{equation}\label{E1E2E3}
\begin{aligned}
E_1(\tau)& \le T (L_{\tilde\mu})^2 \cdot \int_0^{\tau}\E\!\left[|\appz_{\es}-G(\appx_{\es})|^2  \right] ds\le T (L_{\tilde\mu})^2 \cdot \int_0^{\tau}u(s)ds,
\\
E_2(\tau)& \le 2 (L_{\tilde\sigma})^2 \cdot \int_0^{\tau}\E\!\left[|\appz_{\es}-G(\appx_{\es})|^2  \right] ds\le 2 (L_{\tilde\sigma})^2 \cdot \int_0^{\tau}u(s)ds,
\\
E_3(\tau)& \le 2\|\lambda\|_{\infty}(4+T\|\lambda\|_{\infty}) (L_{\tilde\rho})^2 \cdot \int_0^{\tau}\E\!\left[|\appz_{\es}-G(\appx_{\es})|^2  \right] ds\\&\le (8\|\lambda\|_{\infty}+2T\|\lambda\|_{\infty}^2) (L_{\tilde\rho})^2 \cdot \int_0^{\tau}u(s)ds.
\end{aligned}
\end{equation}

The linear growth condition on $\sigma$, the Lipschitz continuity of $G'$, and the fact that $\appx_{\es}$ is $\cF_\es$-measurable give
\begin{align*}
E_5(\tau) & \le 2 (c_\sigma)^2 (L_{G'})^2 \cdot \int_0^{T} \E\!\left[(1+|\appx_{\es}| )^2 \cdot |\appx_{\es}-\appx_s|^2  \right] ds
\\&=
2 (c_\sigma)^2 (L_{G'})^2 \cdot \int_0^{T} \E\!\left[(1+|\appx_{\es}| )^2 \cdot\E\!\left[ |\appx_{\es}-\appx_s|^2  \,\big| \, \cF_\es\right]  \right] ds
.
\end{align*}
Let $c_2=\|\lambda\|_{\infty}\cdot \max\{\|\lambda\|_{\infty},1\}$.
Since $W_s-W_\es$ and $N_s-N_\es$ are independent of $\cF_\es$, $\appx_\es$ is $\cF_\es$-measurable, and by the linear growth condition on $\mu$, $\sigma$, and $\rho$ we get for $s\in [0,T]$,
\begin{equation}
\begin{aligned}
	&\E\!\left[ |\appx_{\es}-\appx_s|^2 \, \big| \, \cF_\es \right]=\E\!\left[|\mu(\appx_\es)(s-\es)+\sigma(\appx_\es)(W_s-W_{\es})+\rho(\appx_\es)(N_s-N_{\es})|^2 \, \big| \, \cF_\es\right]
	\\\qquad&\le 
	3|\mu(\appx_\es)|^2 (s-\es)^2 +3|\sigma(\appx_\es)|^2 (s-\es)+3|\rho(\appx_\es)|^2 (\|\lambda\|_{\infty}^2(s-\es)^2+\|\lambda\|_{\infty} (s-\es))
	\\\qquad&\le
	6((c_\mu)^2+(c_\sigma)^2+2c_2(c_\rho)^2 )(1+|\appx_{\es}|^2)\cdot |s-\es|.
\end{aligned}
\end{equation}
This and Lemma \ref{msq-appx} yield
\begin{equation}\label{E5}
\begin{aligned}
E_5(\tau)&\le 
48 (c_\sigma)^2 (L_{G'})^2 ((c_\mu)^2+(c_\sigma)^2+2(c_\rho)^2\|\lambda\|_{\infty}^2 ) \cdot \int_0^{T}  |s-\es|\cdot\left(1+\E\!\left[|\appx_{\es}| ^4 \right] \right)ds
\\&\le 48 T (c_\sigma)^2 (L_{G'})^2 ((c_\mu)^2+(c_\sigma)^2+2c_2(c_\rho)^2 \|\lambda\|_{\infty}^2)(1+C^{(\text{M})})\cdot \delta.
\end{aligned}
\end{equation}

The Lipschitz continuity of $G$ establishes
\begin{align*}
E_6(\tau)
&\le 4\|\lambda\|_{\infty}(4+T\|\lambda\|_{\infty}) 
\\&\quad\cdot\int_0^{T}\E\!\left[|G(\appx_{\es}+\rho(\appx_{\es}))-G(\appx_{s}+\rho(\appx_{\es}))|^2+|G(\appx_{s})-G(\appx_{\es})|^2 \right]ds
\\&\le 8 (L_G)^2\|\lambda\|_{\infty}(4+T\|\lambda\|_{\infty}) \cdot\int_0^{T}\E\!\left[|\appx_{s}-\appx_{\es}|^2\right] ds.
\end{align*}
By  Lemma \ref{msq-appx},
\begin{align}\label{E6}
E_6(\tau)
&=8 C^{(\text{M})} (L_G)^2\|\lambda\|_{\infty}(4+T\|\lambda\|_{\infty})  \cdot  \int_0^{T} |s-\es| ds
\\&=  8 C^{(\text{M})}T(L_G)^2\|\lambda\|_{\infty}(4+T\|\lambda\|_{\infty})  \cdot \delta.
\end{align}
For estimating $E_4(\tau)$ first note that
\begin{equation}\label{E4-0}
\begin{aligned}
E_4 (\tau)&\le 2T \cdot \E\bigg[\int_0^{T} |G'(\appx_\es) -G'(\appx_s)|^2 \cdot |\mu(\appx_\es)|^2 ds \bigg]
\\&\quad+
 \frac{1}{2} \cdot \E\bigg[\Big|\int_0^{T} G''(\appx_\es)\sigma^2(\appx_\es)-G''(\appx_s)\sigma^2(\appx_\es) ds \Big|^2 \bigg].
\end{aligned}
\end{equation}
Analog to the estimate of $E_5(\tau)$ above we obtain for the first term in \eqref{E4-0},
\begin{align}\label{E4-1}
&2T \cdot \E\bigg[\int_0^{T} |G'(\appx_\es) -G'(\appx_s)|^2 \cdot |\mu(\appx_\es)|^2 ds \bigg]
\\&\quad\le 48T^2 (c_\mu)^2 (L_{G'})^2 ((c_\mu)^2+(c_\sigma)^2+2c_2(c_\rho)^2)(1+C^{(\text{M})})\cdot \delta.
\end{align}
The second term in \eqref{E4-0} will be analysed in the way introduced in \cite{muellergronbach2019}, but adapted to our setup.
It is essential that we have not applied the Cauchy-Schwarz inequality to the second term in \eqref{E4-0} for recovering the optimal convergence rate $1/2$.
For this we define for all $k\in\{1,\dots,m\}$ the sets
\begin{equation*}
\mathcal{Z}_k=\{(x,y)\in\R^2 \colon (x-\zeta_k)(y-\zeta_k)\le0\},\qquad \mathcal{Z}=\bigcup_{\ell=1}^m \mathcal{Z}_\ell.
\end{equation*}
that is the set of all $x,y\in\R$ which lie on different sides of a point of discontinuity of the drift.
So for $(\appx_\es,\appx_s)\in\mathcal{Z}^c$ we have that a Lipschitz condition is satisfied by $G''$.
So for
\begin{equation}\label{E4-2-0}
\begin{aligned}
&\frac{1}{2} \cdot \E\bigg[\Big|\int_0^{T} G''(\appx_\es)\sigma^2(\appx_\es)-G''(\appx_s)\sigma^2(\appx_\es) ds \Big|^2 \bigg]
 \\&\quad\le
  \E\bigg[\Big|\int_0^{T} \left[G''(\appx_\es)\sigma^2(\appx_\es)-G''(\appx_s)\sigma^2(\appx_\es) \right]\cdot\1_{\mathcal{Z}}(\appx_\es,\appx_s)ds\Big|^2 \bigg]
   \\&\qquad+
    T\cdot\E\bigg[\int_0^{T} |G''(\appx_\es)\sigma^2(\appx_\es)-G''(\appx_s)\sigma^2(\appx_\es)|^2 \cdot\1_{\mathcal{Z}^c}(\appx_\es,\appx_s)ds \bigg]
\end{aligned}
\end{equation}
we can estimate the second term similar to above, that is
\begin{equation}\label{E4-2-1}
\begin{aligned}
&  T\cdot\E\!\left[\int_0^{T} |G''(\appx_\es)\sigma^2(\appx_\es)-G''(\appx_s)\sigma^2(\appx_\es)|^2 \cdot\1_{\mathcal{Z}^c}(\appx_\es,\appx_s) ds\right]
 \\&\quad\le
  T(c_\sigma)^4 (L_{G''})^2\cdot\E\!\left[\int_0^{T}(1+|\appx_\es|)^4\cdot |\appx_\es-\appx_s|^2  ds\right]
   \\&\quad\le 96T^2(c_\sigma)^4 (L_{G''})^2((c_\mu)^2+(c_\sigma)^2+2c_2(c_\rho)^2 )(1+C^{(\text{M})})\cdot \delta
  .
\end{aligned}
\end{equation}
For the first term in \eqref{E4-2-0}, observe that for all $k\in\{1,\dots,m\}$, $x,y \in \R$,
\begin{equation*}
|x| \1_{\mathcal{Z}_k}(x,y)\le (|\zeta_k|+|x-\zeta_k|) \1_{\mathcal{Z}_k}(x,y)\le (|\zeta_k|+|x-y|) \1_{\mathcal{Z}_k}(x,y),
\end{equation*}
and hence
\begin{equation*}
 (1+x^2) \cdot\1_{\mathcal{Z}}(x,y)\le  2m|x-y|^2 + (1+2\max\{|\zeta_1|,|\zeta_m|\}^2) \cdot \sum_{k=1}^m  \1_{\mathcal{Z}_k}(x,y).
\end{equation*}
This and Lemma \ref{msq-appx} yield for the first term in \eqref{E4-2-0},
\begin{equation}\label{E4-2-2-1}
\begin{aligned}
&  \E\bigg[\Big|\int_0^{T} \left[G''(\appx_\es)\sigma^2(\appx_\es)-G''(\appx_s)\sigma^2(\appx_\es)\right] \cdot\1_{\mathcal{Z}}(\appx_\es,\appx_s)ds\Big|^2 \bigg]
    \\&\quad\le
  4 (c_\sigma)^4 \|G''\|_\infty ^2\cdot\E\bigg[\Big|\int_0^{T} (1+|\appx_\es|^2)\cdot\1_{\mathcal{Z}}(\appx_\es,\appx_s)ds\Big|^2 \bigg]
    \\&\quad\le
    4 (c_\sigma)^4 \|G''\|_\infty ^2\cdot\E\bigg[\Big|\int_0^{T} \Big[2m|\appx_\es-\appx_s|^2 
    \\&\qquad+ (1+2\max\{|\zeta_1|,|\zeta_m|\}^2) \cdot \sum_{k=1}^m  \1_{\mathcal{Z}_k}(\appx_\es,\appx_s)\Big]ds\Big|^2 \bigg]  
        \\&\quad\le
    32Tm (c_\sigma)^4 \|G''\|_\infty ^2\cdot \int_0^T \E[|\appx_\es-\appx_s|^4] ds
            \\&\qquad+
              8(c_\sigma)^4(1+2\max\{|\zeta_1|,|\zeta_m|\}^2)^2  \|G''\|_\infty ^2 \cdot\E\bigg[\Big|\int_0^{T}  \sum_{k=1}^m  \1_{\mathcal{Z}_k}(\appx_\es,\appx_s) ds\Big|^2\bigg]
             \\&\quad\le
    32Tm (c_\sigma)^4 \|G''\|_\infty ^2\cdot \sum_{k=0}^{N-1} \int_{t_k}^{t_{k+1}} \E[|\appx_\es-\appx_s|^4] ds
            \\&\qquad+
              8m(c_\sigma)^4(1+2\max\{|\zeta_1|,|\zeta_m|\}^2)^2  \|G''\|_\infty ^2 \cdot\sum_{k=1}^m \E\bigg[\Big|\int_0^{T}   \1_{\mathcal{Z}_k}(\appx_\es,\appx_s) ds\Big|^2\bigg]
       \\&\quad\le
    32T^2 mC^{(\text{M})} (c_\sigma)^4 \|G''\|_\infty ^2\cdot \delta
         \\&\qquad+
              8m^2(c_\sigma)^4(1+2\max\{|\zeta_1|,|\zeta_m|\}^2)^2  \|G''\|_\infty ^2 \cdot \E\bigg[\Big|\int_0^{T}   \1_{\mathcal{Z}_k}(\appx_\es,\appx_s) ds\Big|^2\bigg]
  .
\end{aligned}
\end{equation}
Proposition \ref{prob-cross} yields that 
\begin{equation}\label{E4-2-2-2}
\begin{aligned}
 \E\bigg[\Big|\int_0^{T}   \1_{\mathcal{Z}_k}(\appx_\es,\appx_s) ds\Big|^2\bigg]
\le C^{(\text{cross})}\cdot \delta
  .
\end{aligned}
\end{equation}
Combining \eqref{E4-0} with \eqref{E4-1}, \eqref{E4-2-0}, \eqref{E4-2-1}, \eqref{E4-2-2-1}, and \eqref{E4-2-2-2} we get
\begin{equation}\label{E4}
\begin{aligned}
E_4(\tau)&\le 
 \delta \cdot
\big[48T^2\big((c_\mu)^2 (L_{G'})^2 +2(c_\sigma)^4 (L_{G''})^2\big)((c_\mu)^2+(c_\sigma)^2+2c_2(c_\rho)^2 )(1+C^{(\text{M})})
 \\&\qquad+
     32T^2 mC^{(\text{M})} (c_\sigma)^4 \|G''\|_\infty ^2
 \\&\qquad+
   8m^2C^{(\text{cross})}(c_\sigma)^4(1+2\max\{|\zeta_1|,|\zeta_m|\}^2)^2  \|G''\|_\infty ^2\big] 
   .
\end{aligned}
\end{equation}
Combining \eqref{6E-i} with the estimates \eqref{E1E2E3}, \eqref{E5}, \eqref{E6}, and \eqref{E4} shows that
there exist constants $c_3,c_4\in(0,\infty)$ such that
\begin{equation*}
0\le u(\tau) \le c_4 \int_0^\tau u(s) ds + c_3 \delta.
\end{equation*}
Applying Gronwall's inequality yields for all $\tau \in [0,T]$,
\begin{equation*}
u(\tau)\le c_{3} \exp(c_4 \tau) \cdot\delta\le c_{3} \exp(c_4 T) \cdot\delta.
\end{equation*}
Combining this with \eqref{eq:est-lip}, \eqref{eq:est-triangle}, and \eqref{eq:est-euler} finally yields
\begin{equation*}
\bigg(\E\bigg[\sup_{t\in[0,T]}|X_t- \appx_t|^2\bigg]\bigg)^{\!1/2}
 \le L_{G^{-1}}(c_1 \delta)^{1/2} + L_{G^{-1}}\left( c_{3} \exp(c_4 T) \cdot\delta\right)^{\!1/2}.
 \end{equation*}
This closes the proof.
\end{proof}

\subsection{Numerical examples}

Finally, we do a numerical test to check whether the obtained convergence order $\delta^{1/2}$ can actually be observed. We choose the following coefficients:
\begin{example}\label{ex1}
\begin{align*}
\mu(x)=\begin{cases}
		1 & x\ge 0\\
		-1 & x<0
		\end{cases},
		\qquad
		\sigma(x)=x+0.1,
		\qquad
		\rho(x)=x,
\end{align*}
\end{example}
\begin{example}\label{ex2}
\begin{align*}
\mu(x)=\begin{cases}
		-1 & x\ge 0\\
		1 & x<0
		\end{cases},
		\qquad
		\sigma(x)=x+0.1,
		\qquad
		\rho(x)=x,
\end{align*}
\end{example}
and we set the parameter $\lambda\equiv1$ and the initial value $\xi=0.1$.
In Example \ref{ex1} the drift is outward trending -- away from the discontinuity. In Example \ref{ex2} the drift points towards the discontinuity. Note that both examples satisfy Assumption \ref{ass:ex-un}.
The $L^2$-error is estimated by
\begin{align*}
\text{error}(k) =  \text{mean}\big(|X_T^{(k)}(\omega) - X_T^{(k-1)}(\omega)|^2\big)^{1/2},
\end{align*}
where $X_T^{(k)}(\omega)$ is an approximation of $X_T(\omega)$ with step size $\delta^{(k)}$; the mean is taken over $2^{16}$ sample paths $\omega$.
Figure \ref{err} shows $\log_2(\text{error}(k))$ plotted over $\log_2(\delta ^{(k)})$ for both examples. For Example \ref{ex1} we observe that the slope of the estimated error is approximately the same as the slope of $\log_2(\delta^{1/2})$, which shows that the theoretical convergence order is attained. For Example \ref{ex2} we observe that the slope seems to be even steeper than the slope of $\log_2(\delta^{1/2})$. The same behaviour has been observed and discussed in~\cite{lux} in the Brownian additive noise case. This shows that numerical tests have to be interpreted carefully in the case of a discontinuous drift coefficient and knowing the theoretical rates is even more important.
\begin{center}
\begin{figure}[ht]
\includegraphics[width=\textwidth]{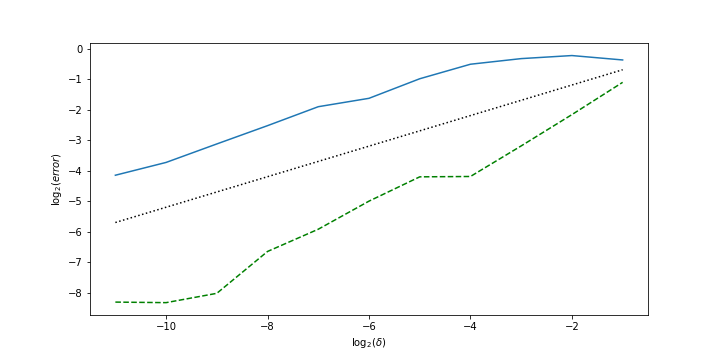}
\caption{The slope of the black dotted line indicates the theoretical convergence order $\delta^{1/2}$; the blue line shows the error estimate for Example \ref{ex1}; the green dashed line shows the error estimate for Example \ref{ex2}.}\label{err}
\end{figure}
\end{center}


\section*{Acknowledgements}

M.~Sz\"olgyenyi is supported by the AXA Research Fund grant ``Numerical Methods for Stochastic Differential Equations with Irregular Coefficients with Applications in Risk Theory and Mathematical Finance''.

P.~Przyby{\l}owicz is supported  by  the  National  Science  Centre, Poland, under project\\ 2017/25/B/ST1/00945.



\vspace{2em}
\centerline{\underline{\hspace*{16cm}}}

 \noindent Pawe{\l} Przyby{\l}owicz \Letter \\
Faculty of Applied Mathematics, AGH University of Science and Technology, Al.~Mickiewicza 30, 30-059 Krakow, Poland\\
pprzybyl@agh.edu.pl\\

\noindent Michaela Sz\"olgyenyi \\
Department of Statistics, University of Klagenfurt, Universit\"atsstra\ss{}e 65-67, 9020 Klagenfurt, Austria\\
michaela.szoelgyenyi@aau.at\\


\end{document}